\documentclass[10pt,reqno]{amsart}

\usepackage{amssymb}
\usepackage{amsfonts}
\usepackage{amsthm}
\usepackage{amsmath}
\usepackage{bbm}

\numberwithin{equation}{section}
\allowdisplaybreaks

\newtheorem{theorem}{Theorem}[section]
\newtheorem{proposition}[theorem]{Proposition}

\newtheorem{lemma}[theorem]{Lemma}
\newtheorem{corollary}[theorem]{Corollary}

\theoremstyle{definition}

\theoremstyle{remark}

\newcommand{\R}{\mathbb{R}}

\newcommand{\Z}{\mathbb{Z}}

\renewcommand{\hat}{\widehat}
\newcommand{\eps}{\varepsilon}


\newcommand{\scriptE}{\mathcal{E}}
\newcommand{\scriptF}{\mathcal{F}}

\newcommand{\qtq}[1]{\quad\text{#1}\quad}

\DeclareMathOperator*{\supp}{supp}

\DeclareMathOperator*{\dist}{dist}

\begin{document}
\title[Fourier restriction to polynomial curves]{Uniform estimates for Fourier restriction to polynomial curves in $\R^d$}
\author{Betsy Stovall}
\address{Department of Mathematics, University of Wisconsin, Madison, WI 53706}
\email{stovall@math.wisc.edu}
\begin{abstract}
We prove uniform $L^p \to L^q$ bounds for Fourier restriction to polynomial curves in $\R^d$ with affine arclength measure, in the conjectured range.  
\end{abstract}

\maketitle

\section{Introduction} 

In this article, we consider the problem of restricting the Fourier transform of an $L^p$ function on $\R^d$ to a  curve $\gamma:\R \to \R^d$.  It has been known for a number of years that when the curve is equipped with Euclidean arclength measure, the $p$ and $q$ such that this defines a bounded operator from $L^p$ to $L^q$ must depend on the maximal order of vanishing of the torsion $L_\gamma:=\det(\gamma',\ldots,\gamma^{(d)})$.  However, when the curve is equipped with affine arclength measure, $\lambda_\gamma\, dt = |L_\gamma|^{\frac 2{d(d+1)}}\, dt$, any degeneracies of curvature are mitigated and the known torsion-dependent obstructions vanish.  

Because the sharp $L^p(dx) \to L^q(\lambda_\gamma\, dt)$ estimates for the restriction operator are completely invariant under affine transformations of $\R^d$ and reparametrizations of $\gamma$, there has been considerable interest (such as \cite{BOS09, BOS08, BOS11, DeMu, DW, DrMa85, DrMa87, Sjolin}) in the question of whether such bounds hold uniformly over certain large classes of curves.  This is part of a broader program (\cite{CKZ1, CZ, DSjfa,  OberlinPAMS12, OberlinCamb00} and many others) to determine whether curvature-dependent bounds for various operators arising in harmonic analysis can be generalized, uniformly, by the addition of appropriate affine arclength or surface measures.  

In the case of Fourier restriction to curves, the only known obstruction to $L^p(dx)\to L^q(\lambda_\gamma\, dt)$ boundedness is oscillation.  The example $(t,e^{-1/t}\sin(t^{-k}))$, $0<t<1$ is due to Sj\"olin (\cite{Sjolin}); an example with nonvanishing torsion is a line of irrational slope on the two-torus in $\R^3$.  Motivated by this, a natural question, suggested by Dendrinos--Wright in \cite{DW}, is whether there hold restriction estimates with constants that are uniform over the class of polynomial curves of any fixed degree.  This question has been settled in the affirmative in dimension two \cite{Sjolin}.  In higher dimensions, the result has been proved for general polynomial curves in a restricted range \cite{BOS11, DW}, and for monomial \cite{BOS09, Drury85} and `simple' \cite{BOS11} polynomial curves in the full range.  Our main theorem settles this question in the remaining conjectured cases.  

\begin{theorem}\label{T:main}  For each $N$, $d$, and $(p,q)$ satisfying 
\begin{equation} \label{E:restriction pq}
p'=\tfrac{d(d+1)}2q, \qquad q > \tfrac{d^2+d+2}{d^2+d}, 
\end{equation}
there exists a constant $C_{N,d,p}$ such that for all polynomials $\gamma:\R \to \R^d$ of degree less than or equal to $N$,
\begin{equation} \label{E:restriction}
\|\hat f(\gamma(t)) \|_{L^q(\lambda_\gamma\, dt)} \leq C_{N,d,p}\|f\|_{L^q(dx)},
\end{equation} 
for all Schwartz functions $f$.  
\end{theorem}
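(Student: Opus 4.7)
My plan is to dualize to an extension estimate, then decompose the parameter line into a bounded number of pieces on which $\gamma$ becomes, after an affine map of $\R^d$ and a reparametrization, equivalent to a ``simple'' polynomial curve to which the uniform extension estimate of \cite{BOS11} applies. Concretely, \eqref{E:restriction} is equivalent by duality to
\[
\Bigl\|\int e^{ix\cdot\gamma(t)} g(t)\,\lambda_\gamma(t)\,dt\Bigr\|_{L^{p'}(dx)} \le C_{N,d,p}\,\|g\|_{L^{q'}(\lambda_\gamma\,dt)},
\]
and I would prove this with a constant uniform in $\gamma$.

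For the decomposition I would follow the Dendrinos--Wright framework: every scalar minor of the matrix $(\gamma',\ldots,\gamma^{(d)})$ is a polynomial in $t$ of degree at most $dN-\binom{d+1}{2}$, so a partition of $\R$ into $M=O_{N,d}(1)$ intervals $I_1,\ldots,I_M$ can be arranged on which every such minor is comparable to a pure power $|t-t_j|^{\alpha}$ anchored at an endpoint $t_j$. Using the osculating flag of $\gamma$ at $t_j$ I would produce an affine map $A_j$ of $\R^d$ and a reparametrization $\phi_j$ so that $\tilde\gamma_j := A_j\circ\gamma\circ\phi_j$ has normalized derivatives at $0$ and lies in a family of simple polynomial curves depending only on $N$ and $d$. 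The scaling relation $p'=\tfrac{d(d+1)}{2}q$ is exactly the affine-invariance condition for the restriction operator with measure $\lambda_\gamma\,dt$, so the uniform simple-curve estimate of \cite{BOS11} would transfer back to $\gamma|_{I_j}$ with constant depending only on $N,d,p$. Summing over the $M$ pieces by the triangle inequality in $L^{p'}$ then closes the argument.

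The main obstacle is executing the decomposition so that each piece really is equivalent to a uniformly simple curve. A single osculating-flag rescaling may leave the residual polynomial with large higher-order coefficients, representing additional smaller scales that must themselves be resolved; one therefore needs an iterative decomposition in which each step either exits into the simple-curve regime or strictly reduces a complexity parameter (such as the degree of $L_{\tilde\gamma_j}$ or the number of orders of vanishing already accounted for), so that the recursion terminates in $O_{N,d}(1)$ steps. Tracking constants uniformly through this recursion, and showing at each stage that the resulting simple family is genuinely compact, are the main technical hurdles. The hypothesis $q>\frac{d^2+d+2}{d^2+d}$ is inherited from the simple-curve estimate of \cite{BOS11} and does not appear to need further restriction in the decomposition step, so the core difficulty is combinatorial-geometric: reducing general polynomial curves to the simple case, uniformly over polynomials of degree at most $N$.
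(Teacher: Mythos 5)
Your dualization is fine and matches the paper's setup, but the core reduction — partitioning $\R$ into $O_{N,d}(1)$ intervals on which $\gamma$ becomes, after an affine map and reparametrization, a uniformly ``simple'' curve $(t,t^2,\ldots,t^{d-1},P(t))$ of bounded complexity — is not achievable, and this is exactly the obstacle the paper is written to circumvent. An affine map of $\R^d$ can normalize the osculating flag of $\gamma$ at a single point, and a reparametrization $s=\gamma_1(t)$ can make the first coordinate linear, but $\gamma_i\circ\gamma_1^{-1}(s)=s^i$ for all $i<d$ only for very special $\gamma$. What one gets instead, near a given point, is a curve that is \emph{approximately} monomial-like on some neighborhood whose size $\delta_\gamma$ depends on the polynomial in an uncontrolled way — this is precisely the Dendrinos--M\"uller \cite{DeMu} setup. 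To cover $I_j$ by such neighborhoods one needs a number of pieces that is not bounded in terms of $N,d$ alone, and tracking how $\delta_\gamma$ degenerates after the successive affine renormalizations your iteration would require is, as the paper discusses in its ``Comparison with prior work,'' the known sticking point: there is no monotone complexity parameter (the degree of $L_\gamma$ is invariant under affine maps and polynomial reparametrizations), so your recursion has no termination guarantee. You would also be left with infinitely many pieces on each $I_j$, so the triangle inequality no longer suffices and an orthogonality mechanism is needed.

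The paper takes a different route precisely to avoid this. After the Dendrinos--Wright decomposition into $O_{N,d}(1)$ intervals on which $|L_\gamma|\sim A_j|t-b_j|^{k_j}$, each such interval is further split dyadically in $|t-b_j|$ so that $|L_\gamma|$ is roughly constant on each dyadic shell. On a single shell, a compactness/rescaling argument (Lemma~\ref{L:CN}) shows that $\gamma$ is, after an affine renormalization, a uniformly bounded perturbation of the moment curve; combined with a uniform geometric inequality for offspring curves (Lemma~\ref{L:uniform DM}) and Drury's iteration, this gives the local estimate (Theorem~\ref{T:local rest}) with constants depending only on $N,d,p$. The infinite dyadic sum is then controlled by a Littlewood--Paley square-function estimate in the $\xi_1$ variable (Proposition~\ref{P:square}), exploiting $|\gamma_1'|\sim B_j|t-b_j|^{\ell_j}$, plus a $\tfrac{d(d+1)}2$-linear estimate with off-scale decay (Lemma~\ref{L:multilinear}) descending from Christ's multilinear bound. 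So where you propose to iterate affine normalizations until each piece is globally simple, the paper instead accepts the dyadic multiplicity of pieces and supplies almost-orthogonality to sum them. If you want to salvage your approach you would need either a genuinely finite reduction to the BOS11 family (which does not exist for general $\gamma$), or a replacement for the square-function plus multilinear machinery to handle the unavoidable dyadic pile-up.
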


This is sharp.  A simple scaling argument shows that \eqref{E:restriction} can only (provided $\lambda_\gamma \not\equiv 0$) hold if $p' = \tfrac{d(d+1)}2 q$, and Arkhipov--Chubarikov--Kuratsuba proved in \cite{AKC} that the restriction $q > \tfrac{d^2+d+2}{d^2+2d}$ is also necessary.  At the endpoint $q_d = \tfrac{d^2+d+2}{d^2+d}$ (at which point $p=q$), there are some cases where the corresponding restricted strong type bound is known \cite{BOS09, BOS11}; we do not address the endpoint case here.  

The parametrization and affine invariance of \eqref{E:restriction} make the affine arclength measure a natural object of study.  This aptness is underscored by the facts that it is essentially the largest positive measure such that the above $L^p \to L^q$ bounds can hold (\cite{Oberlin03}, which also has an interesting geometric perspective), and in the case of a compact nondegenerate curve, the interpolation of the above estimates with elementary ones can be used to deduce essentially all valid $L^p \to L^q$ inequalities \cite{AKC}.  In the general polynomial case, we will show that Theorem~\ref{T:main} and an interpolation argument imply an extension into Lorentz spaces, as well as the full range of estimates for the unweighted operator.  Let $K_{\rm{min}}$ equal the maximum order of vanishing of $L_\gamma$ on $\R$ and let $K_{\rm{max}}$ equal the degree of $L_\gamma$.  Let $N_{\bullet} = K_{\bullet}+\tfrac{d^2+d}2$.  

\begin{corollary}\label{C:interp}  
Let $d \geq 3$ and let $\gamma:\R \to \R^d$ be a polynomial curve, with $L_\gamma \not\equiv 0$.  Define $d_\gamma(t) = \dist(t,Z_\gamma)$, where $Z_\gamma$ is any finite set containing the complex zeros of $L_\gamma$.  Then for all $1 < p < \tfrac{d^2+d+2}{d^2+d}$ and $q \leq \tfrac{2p'}{d(d+1)}$, 
\begin{equation} \label{E:DM interp}
\|\hat f(\gamma(t)) |L_\gamma(t)|^{\frac1{p'}} d_\gamma(t)^{-\frac1q+\frac{d(d+1)}{2p'}}\|_{L^{q,p}(dt)} \leq C_{p,q,\deg \gamma,\#Z_\gamma} \|f\|_{L^p(dx)}.
\end{equation}
Furthermore, the unweighted operator satisfies
\begin{equation} \label{E:unweighted}
\|\hat f(\gamma(t))\|_{L^q(dt)} \leq C_{p,q,\gamma}\|f\|_{L^p(dx)},
\end{equation}
if and only if $1 \leq p < \tfrac{d^2+d+2}{d^2+d}$, and either $p \leq q$ and $N_{\rm{min}} q \leq p' \leq N_{\rm{max}}q$, or $p > q$ and $N_{\rm{min}}q < p' < N_{\rm{max}}q$.  
\end{corollary}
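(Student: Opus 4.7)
My plan is to deduce Corollary~\ref{C:interp} from Theorem~\ref{T:main} in two stages. First I would obtain the weighted Lorentz-space estimate \eqref{E:DM interp} via real interpolation combined with a Whitney-type decomposition of the real line. Second, I would derive the unweighted estimate \eqref{E:unweighted} and its sharpness by applying H\"older's inequality in Lorentz spaces and testing against Knapp-type examples.

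For the first step, I would rewrite Theorem~\ref{T:main} as the weighted strong-type bound
\[
\|\hat f(\gamma(\cdot))\,|L_\gamma|^{1/p_0'}\|_{L^{q_0}(dt)} \lesssim \|f\|_{L^{p_0}},
\]
valid for every $q_0 > q_d$ with $p_0' = \tfrac{d(d+1)}{2}q_0$. Combining this family of estimates with the trivial bound $\|\hat f(\gamma(\cdot))\|_\infty \leq \|f\|_1$, standard real interpolation yields intermediate pairs on the line $p' = \tfrac{d(d+1)}{2}q$, upgraded on the left to the Lorentz norm $L^{q,p}$. To move off this line and to insert the additional factor $d_\gamma^{-1/q + d(d+1)/(2p')}$, I would decompose $\R$ into a controlled number (bounded in terms of $\deg\gamma$ and $\#Z_\gamma$) of intervals $I_k$ on each of which $d_\gamma$ is dyadically constant and $|L_\gamma|$ is comparable to a monomial in $d_\gamma$. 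On each piece, after an affine reparametrization to a reference interval, the uniform constant in Theorem~\ref{T:main} gives the correct local bound; a scaling computation converts the scale factor into the required $d_\gamma$-weight, and summation over pieces in the Lorentz quasi-norm produces \eqref{E:DM interp}. The main delicate point is arranging the decomposition and reparametrization so that both the number of pieces and the local constants remain controlled purely by $N$ and $\#Z_\gamma$, which is precisely the uniformity afforded by Theorem~\ref{T:main}.

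For the second step, I would apply H\"older's inequality in Lorentz spaces by factoring
\[
\hat f\circ\gamma = \bigl[\hat f\circ\gamma \cdot |L_\gamma|^{1/p'}\,d_\gamma^{-1/q + d(d+1)/(2p')}\bigr] \cdot \bigl[|L_\gamma|^{-1/p'}\,d_\gamma^{1/q - d(d+1)/(2p')}\bigr].
\]
The first bracket is controlled in $L^{q,p}(dt)$ by \eqref{E:DM interp}, so \eqref{E:unweighted} reduces to placing the second bracket in the complementary Lorentz space dictated by H\"older. Since $|L_\gamma(t)| \sim \prod_{z\in Z_\gamma}|t-z|$, near a real root $t_0$ of multiplicity $m \leq K_{\rm{min}}$ the second factor behaves like $|t-t_0|^{1/q - (d(d+1)/2 + m)/p'}$, while as $|t|\to\infty$ it decays like $|t|^{1/q - N_{\rm{max}}/p'}$. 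The required Lorentz-norm integrability then pins down the range $N_{\rm{min}} q \leq p' \leq N_{\rm{max}} q$; the strict inequalities in the case $p > q$ come from H\"older demanding strong-type rather than weak-type integrability of the second factor. Sharpness is verified by explicit Knapp test functions: a long arc of $\gamma$ at infinity saturates the upper bound tied to $N_{\rm{max}}$, and a cap localized near a real zero of $L_\gamma$ of maximum order $K_{\rm{min}}$ saturates the lower bound tied to $N_{\rm{min}}$.
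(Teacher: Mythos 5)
Your overall architecture matches the paper's: prove the weighted Lorentz estimate \eqref{E:DM interp} first by decomposing $\R$ and invoking Theorem~\ref{T:main}, then deduce \eqref{E:unweighted} by H\"older and establish sharpness by Knapp-type testing. But two central steps in your outline don't go through as stated.

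\textbf{The interpolation step for \eqref{E:DM interp}.} You propose to interpolate the weighted bound $\|\hat f(\gamma)\,|L_\gamma|^{1/p_0'}\|_{L^{q_0}}\lesssim\|f\|_{L^{p_0}}$ against the trivial $L^1\to L^\infty$ bound by ``standard real interpolation'' to obtain $L^{q,p}$ on the sharp line. The difficulty is that the weight $|L_\gamma|^{1/p'}$ changes with $p$, so these are not estimates for a single operator; real interpolation of a fixed operator does not produce the $p$-dependent weight you need. The paper handles this by first applying Lorentz-space H\"older against $|t-b|^{-1/r}\in L^{r,\infty}$ (within each interval of the decomposition, where $|L_\gamma|\sim C_j|t-b_j|^{k_j}$), producing a two-parameter family of bounds indexed by $p$ and $c\le q$ with weight exponent $\tfrac{k}{p'}-\tfrac1c+\tfrac1q$. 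It then applies Marcinkiewicz interpolation \emph{along level sets of that exponent}, so that the weight is genuinely constant across the pair of endpoints being interpolated, and the Lorentz improvement comes out at the intermediate point. Your outline has no analogue of this level-set step; the ``scaling computation'' and ``summation over pieces'' are not a substitute, and moreover the dyadic-in-$d_\gamma$ decomposition you describe has infinitely many pieces, not a ``controlled number.''

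\textbf{The $p>q$ case of \eqref{E:unweighted}.} Your H\"older factorization works cleanly when $p\le q$: the second bracket is bounded precisely when $N_{\min}q\le p'\le N_{\max}q$, and $L^{q,p}\subset L^q$ finishes it. But for $p>q$ that embedding reverses, and H\"older against an $L^\infty$ factor cannot produce the $L^q$ bound from the $L^{q,p}$ bound. Your remark that the strict inequalities ``come from H\"older demanding strong-type rather than weak-type integrability'' gestures at the right phenomenon but is not a proof: to make a Lorentz H\"older argument work here you would need to invoke \eqref{E:DM interp} at a Lebesgue exponent $q_0\ne q$ and put the inverse-weight factor in a genuine $L^{r,s}$ with $r<\infty$, which requires actual decay (hence strictness) and a nontrivial check of the second Lorentz indices. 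The paper instead runs a direct dyadic decomposition over levels $I_n=\{2^n\le|L_\gamma|<2^{n+1}\}$, applies H\"older on each level, and sums the resulting geometric series, with convergence exactly encoding the strict inequalities. Either route can be made to work, but as written your $p>q$ case is a gap.

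The remaining ingredients --- the Arkhipov--Chubarikov--Karatsuba necessity of $p<\tfrac{d^2+d+2}{d^2+d}$, the Knapp examples at a real zero of maximal order and at infinity, and the stacked-Knapp example for the failure at the endpoints when $p>q$ --- are all consistent with the paper's treatment.
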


The use of the weight $d_\gamma$ and the resulting uniformity in \eqref{E:DM interp} seem to be new, but the estimate and its proof are inspired by an argument in \cite{DrMa85}; \eqref{E:unweighted}  sharpens and makes global certain estimates appearing in \cite{ChTAMS, DrMa85, DrMa87}.  These bounds are valid even for $q<1$ in the given range.  The dependence of the constant in \eqref{E:unweighted} on $\gamma$ is unavoidable because of the lack of affine and parametrization invariance.      Related estimates in the context of generalized Radon transforms will also appear in \cite{DSinprep}.  We note that the techniques used in our proof could also be used to obtain the full range of estimates (necessarily nonuniform) for restriction with Euclidean arclength measure, but the exponents would be a bit more complicated.  

\subsection*{Prior results}

The problem of obtaining uniform bounds for restriction with affine arclength dates back to the 70s, when Sj\"olin \cite{Sjolin} proved a sharp restriction result that is completely uniform over the class of convex plane curves.  This result implies the two dimensional version of Theorem~\ref{T:main} by the triangle inequality.  

In higher dimensions, the first results are due to Prestini in \cite{Prestini3, Prestini_n}, who proved restriction estimates for nondegenerate curves in a restricted range, just off the sharp line.  In \cite{ChTAMS}, Christ extended Prestini's result to the sharp line and the range $q \geq \tfrac{d^2+2d}{d^2+2d-2}$, and obtained new estimates for unweighted restriction to certain degenerate curves.  Shortly thereafter, Drury \cite{Drury85} extended Christ's result to the full range ($q > \tfrac{d^2+d+2}{d^2+d}$) for nondegenerate curves.  

For degenerate curves, the development was somewhat slower.  In \cite{DrMa85, DrMa87}, Drury--Marshall ultimately established sharp, global bounds in the so-called Christ range and local estimates in the interior of the conjectured region for restriction to monomial curves (with arbitrary real powers).  It was not for another twenty years, when Bak--Oberlin--Seeger \cite{BOS09} proved \eqref{E:restriction} for monomial curves in the range \eqref{E:restriction pq}, that any sharp estimates were known beyond the Christ range for any flat curves; \cite{BOS09} also established the endpoint (restricted strong type) estimate for nondegenerate curves.  Very shortly thereafter, Dendrinos--Wright \cite{DW} posed the problem considered here and also proved Theorem~\ref{T:main} in the Christ range.  In \cite{DeMu}, Dendrinos--M\"uller proved that the estimates for monomial curves from \cite{BOS09} are stable under sufficiently small perturbations.  Shortly thereafter, Bak--Oberlin--Seeger \cite{BOS11} established uniform bounds in the full range \eqref{E:restriction pq} for `simple' polynomial curves, i.e.\ curves of the form $(t,t^2,\ldots,t^{d-1},P(t))$, and slightly extended the Dendrinos--Wright range for general polynomial curves.  Our result is new in the remaining cases.

\subsection*{Outline of proof}  Though we build on much of the above-mentioned literature (especially \cite{ChTAMS, DW, Drury85, DrMa85}), we take a new approach for the degenerate case, particularly compared to the recent \cite{BOS09,BOS11,DeMu}, by using a dyadic decomposition according to torsion size, coupled with a square function estimate.

We begin in Section~\ref{S:local rest} by considering the problem of restricting $\hat f$ to a segment along $\gamma$ that has roughly constant torsion.  By a scaling and compactness argument, together with an induction argument from \cite{Drury85}, we establish uniform estimates along this segment without facing the significantly more delicate task of performing real interpolation in the presence of the affine arclength (cf.\ \cite{BOS09, DeMu}).  We close with a more detailed discussion of the techniques in other recent articles.

In order to use the estimates from Section~\ref{S:local rest}, we must decompose the operator according to the size of the torsion, while not ruining our chances of being able to put the pieces back together later.  This we do in Section~\ref{S:square} by means of a uniform square function estimate for the extension operator.  The heuristic behind this is the (false) assertion that if the torsions of two points on the curve are at different scales, then the points themselves must be at different frequency scales.

We complete the proof of Theorem~\ref{T:main} in Section~\ref{S:sum}.  Using an argument inspired by the recent success of bilinear and multilinear approaches to Fourier restriction to hypersurfaces, we reduce matters to proving a $\tfrac{d(d+1)}2$-linear extension estimate, which exhibits decay when the arguments live at different torsion scales on the curve.  The core of the argument is a variant of Christ's multilinear estimate, together with interpolation with our bounds from Section~\ref{S:local rest}.  

In Section~\ref{S:interp}, we prove the corollary.  

It would be interesting to see whether these ideas could be used to obtain more uniform bounds for restriction to sufficiently smooth finite type curves, or whether some of the simplifications here could lead to progress on the endpoint restricted strong type bounds in the general polynomial case.  Somewhat more broadly, our approach of transferring estimates from the non-degenerate to the degenerate case, particularly the use of the square function estimate and multilinear estimates with decay, may be useful in establishing bounds for Fourier restriction to degenerate submanifolds of higher dimension.  

\subsection*{Acknowledgements}  The author would like to thank Andreas Seeger and Spyros Dendrinos for enlightening discussions about the history of this problem (though certainly any omissions or errors in this regard are the author's).  She would also like to thank the anonymous referee.  This work was supported by NSF grant DMS-1266336.  

\subsection*{The dual formulation and other notation}

In proving Theorem~\ref{T:main}, we will focus on the corresponding extension problem.  Fix a polynomial $\gamma:\R \to \R^d$.  Define the weighted and unweighted extension operators
\begin{gather*}
\scriptE_\gamma f(x) = \int_\R e^{ix\gamma(t)}f(t)\, \lambda_\gamma(t)\, dt \qquad\qquad
\scriptF_\gamma f(x) = \int_\R e^{ix\gamma(t)}f(t)\, dt.
\end{gather*}
Theorem~\ref{T:main} is equivalent to showing that 
\begin{equation} \label{E:lplq}
\|\scriptE_\gamma f\|_{L^q(dx)} \leq C_{d,p,N}\|f\|_{L^p(\lambda_\gamma\, dt)},
\end{equation}
for all $(p,q)$ in the range 
\begin{equation} \label{E:admissibility}
q=\tfrac{d(d+1)}2 p', \qquad q > \tfrac{d^2+d+2}2.
\end{equation}

An affine transformation is a map of the form $Ax = Mx+x_0$, where $M$ is a $d \times d$ matrix and $x \in \R^d$.  We will denote by $\det A$ the determinant of the corresponding linear map, $\det A = \det M$.  

As usual, if $B_1$ and $B_2$ are two non-negative quantities, we will write $B_1 \lesssim B_2$ if $B_1 \leq CB_2$ for some innocuous constant $C$.  These constants will be allowed to change from line to line.  

\section{Uniform local restriction} \label{S:local rest}

The bulk of this section will be devoted to a proof of the following theorem.  We will conclude the section with a detailed comparison of our approach with the recent literature.

\begin{theorem}\label{T:local rest}  Fix $d \geq 2$, $N$, and $(p,q)$ satisfying \eqref{E:admissibility}.  For every interval $I \subset \R^d$ and every degree $N$ polynomial $\gamma:\R \to \R^d$ satisfying
\begin{equation} \label{E:roughly constant}
0 < C_1 \leq |\det(\gamma'(t),\ldots,\gamma^{(d)}(t))| \leq C_2, \qquad t \in I,
\end{equation}
we have the extension estimate
\begin{equation} \label{E:local extn}
\|\mathcal E_\gamma (\chi_I f)\|_{L^q} \leq C_{d,N,p,\frac{C_2}{C_1}}\|f\|_{L^p(\lambda_\gamma)}.
\end{equation}
\end{theorem}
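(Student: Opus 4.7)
My approach is to exploit the affine invariance of \eqref{E:local extn} to reduce to a compact family of curves, and then to prove the estimate uniformly over that family via the induction on dimension underlying Drury's proof of the nondegenerate restriction theorem. The estimate \eqref{E:local extn} is invariant under $\gamma \mapsto A\gamma + b$ for any affine map $A$ of $\R^d$: the admissibility relation $q = \tfrac{d(d+1)}2 p'$ is exactly what makes the torsion-dependent scaling of $\lambda_\gamma$ cancel the Jacobian from the spatial side. It is likewise invariant under affine reparametrization of $t$. Using these, I would translate and reparametrize so that $I = [0,1]$ and $\gamma(0) = 0$, and apply a linear change of coordinates in $\R^d$ to normalize the Taylor matrix $[\gamma'(0) \mid \tfrac{1}{2!}\gamma''(0) \mid \cdots \mid \tfrac{1}{d!}\gamma^{(d)}(0)]$ to the identity. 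The hypothesis \eqref{E:roughly constant} then becomes $|L_\gamma(t)| \sim 1$ on $[0,1]$, with implicit constants controlled by $C_2/C_1$.

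Within this normalized class, the first $d+1$ Taylor coefficients of $\gamma$ at $0$ are fixed, while the conditions $|L_\gamma| \sim 1$ on $[0,1]$ and $\deg \gamma \leq N$ together constrain the remaining Taylor coefficients: $L_\gamma$ is a polynomial in $t$ whose coefficients are polynomial expressions in the higher Taylor coefficients of $\gamma$, and $\|L_\gamma\|_{L^\infty([0,1])} \lesssim 1$ together with the degree bound yields control on those coefficients. Consequently the normalized polynomial curves satisfying the hypothesis form a precompact subset of $C^\infty([0,1];\R^d)$, and it suffices to prove \eqref{E:local extn} uniformly over this compact family.

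For the estimate itself I would run the induction on dimension from Drury \cite{Drury85}. The base case $d=2$ is handled by Sj\"olin's uniform theorem for convex plane curves \cite{Sjolin}. The inductive step reduces the $d$-dimensional extension operator to extension operators on lower-dimensional offspring curves constructed from $\gamma$ by differentiation and projection; the polynomial structure and uniform torsion bounds guarantee that the offspring curves inherit an analogous normalization and nondegeneracy in dimension $d-1$, uniformly over the compact family. A compactness--contradiction argument over the normalized family then upgrades the pointwise finiteness of the Drury constants to a uniform bound of the required form.

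The principal obstacle is to carry Drury's induction through with constants that depend only on $(d, N, p, C_2/C_1)$ rather than on the individual curve. In Drury's treatment of a single nondegenerate curve, uniformity is not a concern; here each step -- the offspring construction, the Hausdorff--Young-type estimates, and the gluing -- must be quantified purely in terms of data controlled by the normalization, so that the final constants depend only on the compact family. As the introduction emphasizes, the polynomial-degree restriction and the uniform torsion hypothesis are precisely what make this quantification tractable, allowing us to avoid the delicate real interpolation in the presence of affine arclength measure that other recent approaches have required.
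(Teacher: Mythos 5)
The overall shape of your argument (normalize by affine invariance, establish a compactness/uniform-$C^N$ bound, then run Drury's induction) matches the paper's proof, but your description of Drury's mechanism is wrong in a way that matters. Drury's induction from \cite{Drury85} is \emph{not} an induction on the dimension $d$, and the offspring curves are \emph{not} obtained by ``differentiation and projection'' to $\R^{d-1}$. The offspring of $\gamma$ are the curves $\gamma_h(t) = \tfrac1K\sum_{j=1}^K \gamma(t+h_j)$ (averages of translates), which live in the \emph{same} $\R^d$, and the induction is on the Lebesgue exponent $p$: the base case is the trivial $L^1(\lambda_\gamma)\to L^\infty$ bound for the extension operator, and each step (Lemma~\ref{L:Drury}) pushes $p$ upward by expressing the $d$-fold convolution $\mu*\cdots*\mu$ in terms of the extension operators of the offspring curves and interpolating. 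Sj\"olin's two-dimensional theorem plays no role as a base case. To make the offspring scheme uniform, what is actually needed is a quantitative version of the Drury--Marshall geometric inequality for the offspring curves (the content of Lemma~\ref{L:uniform DM}), namely that on a short interval $I_h$ one has $|J_{\gamma_h}(t_1,\dots,t_d)| \gtrsim \prod_j |L_{\gamma_h}(t_j)|^{1/d}\prod_{i<j}|t_j-t_i|$ and $|L_{\gamma_h}| \sim 1$, with constants depending only on $d,N$. Your plan omits this entirely.

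The compactness step is also glossed over in a way that hides the real difficulty. Normalizing the first $d$ Taylor coefficients and knowing $\|L_\gamma\|_{L^\infty([0,1])}\lesssim 1$ does not directly bound the higher Taylor coefficients of $\gamma$: the map from those coefficients to the coefficients of $L_\gamma$ is nonlinear and not injective, and there is a priori cancellation to worry about. The paper's Lemma~\ref{L:CN} handles this with a rescaling-and-limit argument: if the $C^N$ bound failed along a sequence, one rescales anisotropically to pin a higher-order minor at a fixed size, passes to a limit, and observes the limit must be the moment curve with $L\equiv 1$, contradicting the nonvanishing of that minor. You would need this (or an equivalent quantitative argument) before any compactness claim is justified. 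Finally, there is no ``compactness--contradiction'' step at the end of the paper's proof: once Lemma~\ref{L:uniform DM} is in hand, the constants in Drury's induction are already uniform, so the final step is purely quantitative. As written, your sketch conflates two different induction schemes and leaves the two genuinely hard lemmas (the $C^N$ bound and the uniform offspring geometric inequality) unaddressed.
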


We will use the notation:
\begin{gather*}
L_\gamma(t) = \det(\gamma'(t),\ldots,\gamma^{(d)}(t)), \qquad J_\gamma(t_1,\ldots,t_d) = \det(\gamma'(t_1),\ldots,\gamma'(t_d)), \\
v(t_1,\ldots,t_d) = c_d\prod_{1 \leq i < j \leq d}(t_j-t_i) \:\:\text{is the Vandermonde determinant.}
\end{gather*}

Since $L_\gamma$ is a polynomial of degree less than $Nd$, we may write $I = \bigcup_{j=1}^{CNd \log(\frac{C_2}{C_1})} I_j$ where 
$$
\tfrac12 C \leq |L_\gamma(t)| \leq 2C, \qquad t \in I_j.
$$
By the triangle inequality, it suffices to prove \eqref{E:local extn} with $I$ replaced by one of the $I_j$.  Utilizing the affine and parametrization invariances, we may assume that $C=1$ and $I=[-1,1]$.  In other words, we may assume that $\gamma$ satisfies 
\begin{equation} \label{E:on I}
\tfrac12 \leq |L_\gamma(t)| \leq 2, \qquad t \in [-1,1],
\end{equation}
and we want to prove
\begin{equation} \label{E:local extn'}
\|\mathcal E_\gamma(\chi_I f)\|_{L^q} \leq C_{d,N,p} \|f\|_{L^p(\lambda_\gamma)}, 
\end{equation}
where $I=[-1,1]$ and $(p,q)$ satisfies \eqref{E:admissibility}.  

\begin{lemma} \label{L:CN}
If $\gamma:\R \to \R^d$ is a degree $N$ polynomial satisfying \eqref{E:on I}, there exists an affine transformation $A$ with $\det A = 1$ and $\|A \gamma\|_{C^N([-1,1])} \leq C_{N,d}$.  
\end{lemma}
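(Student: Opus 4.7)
My plan is a compactness-and-contradiction argument, with the key ingredient being an anisotropic rescaling along a limiting degenerate direction in $\R^d$. Because translations do not affect derivatives, it suffices to control the translation-invariant seminorm $\|\gamma\|_{\ast} := \max_{1 \leq k \leq N}\|\gamma^{(k)}\|_{L^\infty([-1,1])}$: given any $M \in SL(d)$ with $\|M\gamma\|_{\ast}$ controlled, the affine map $A(x):=Mx-M\gamma(0)$ satisfies $\det A=1$, $A\gamma(0)=0$, and $\|A\gamma\|_{C^N([-1,1])}$ of comparable size (the $C^0$-norm of a polynomial vanishing at $0$ is controlled by the sup of its derivative on $[-1,1]$).

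Suppose the lemma fails; then there is a sequence of degree-$N$ polynomials $\gamma_n$ obeying \eqref{E:on I} with $R_n := \inf_{\det M = 1}\|M\gamma_n\|_{\ast} \to \infty$, and near-minimizers $M_n \in SL(d)$ with $\|\tilde\gamma_n\|_{\ast} \leq 2R_n$ where $\tilde\gamma_n := M_n\gamma_n$; the torsion hypothesis persists for $\tilde\gamma_n$ since $\det M_n = 1$. Rescale and normalize the constant term: $\bar\gamma_n := (\tilde\gamma_n - \tilde\gamma_n(0))/\|\tilde\gamma_n\|_{\ast}$ lies in the compact slice $\{\bar\gamma(0)=0,\ \|\bar\gamma\|_{\ast}=1\}$ of the finite-dimensional space of degree-$\leq N$ polynomial curves, so along a subsequence $\bar\gamma_n \to \bar\gamma$ with $\bar\gamma(0) = 0$ and $\|\bar\gamma\|_{\ast} = 1$. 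The upper bound in \eqref{E:on I} gives $|L_{\bar\gamma_n}(t)| = R_n^{-d}|L_{\tilde\gamma_n}(t)| \lesssim R_n^{-d}\to 0$ uniformly on $[-1,1]$, hence $L_{\bar\gamma} \equiv 0$. Invoking the classical fact that a nonzero polynomial curve with identically vanishing torsion is contained in a proper affine subspace, the image $\bar\gamma(\R)$ lies in a linear hyperplane $H \subset \R^d$; take $V := H^\perp$, so $\R^d = H \oplus V$ orthogonally with projections $\pi_H,\pi_V$ of operator norm $1$.

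The contradiction is produced by anisotropic scaling along this degenerate direction. For $\mu > 0$ define $S_\mu \in SL(d)$ by $S_\mu|_H = \mu I_H$ and $S_\mu|_V = \mu^{-(d-1)}I_V$; since $\dim H = d-1$ and $\dim V = 1$, $\det S_\mu = \mu^{d-1}\mu^{-(d-1)} = 1$. Since $S_\mu$ is linear the seminorm is unaffected by adding constants, and the triangle inequality together with $\|\pi_H\bar\gamma_n\|_{\ast} \leq 1$ gives
\[
\|S_{\mu}\tilde\gamma_n\|_{\ast} = R_n\|S_{\mu}\bar\gamma_n\|_{\ast} \leq R_n\bigl(\mu\|\pi_H\bar\gamma_n\|_{\ast} + \mu^{-(d-1)}\|\pi_V\bar\gamma_n\|_{\ast}\bigr) \leq R_n\bigl(\mu + \mu^{-(d-1)}\epsilon_n\bigr),
\]
where $\epsilon_n := \|\pi_V\bar\gamma_n\|_{\ast} \to \|\pi_V\bar\gamma\|_{\ast} = 0$. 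Optimizing at $\mu_n := \epsilon_n^{1/d}$ balances the two terms and yields $\|S_{\mu_n}M_n\gamma_n\|_{\ast} \leq 2R_n\epsilon_n^{1/d} = o(R_n)$. Since $S_{\mu_n}M_n \in SL(d)$, this contradicts the near-minimality of $M_n$.

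The main obstacle is the hyperplane-containment fact used in the second paragraph: a nonzero polynomial curve with $L_\gamma \equiv 0$ lies in a proper affine subspace. This is classical but deserves a short standalone argument, for instance by induction on $d$ using the identity $L_\gamma' = \det(\gamma',\ldots,\gamma^{(d-1)},\gamma^{(d+1)})$ together with constancy of the generic rank of the derivative span for polynomial curves, to conclude that $\mathrm{span}\{\bar\gamma^{(k)}(t) : k \geq 1,\ t \in \R\}$ is a fixed subspace of dimension $\leq d-1$ that, combined with $\bar\gamma(0)=0$, contains $\bar\gamma(\R)$.
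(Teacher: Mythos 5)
Your argument is correct, and it takes a genuinely different route from the paper's. Both proofs are compactness-and-contradiction arguments, but the rescaling steps differ in kind. The paper normalizes $\gamma$ explicitly (so that $\gamma(0)=0$, $\gamma^{(j)}(0)=e_j$), assumes a sequence with unbounded higher coefficients, and applies a \emph{parameter-space} rescaling $t\mapsto\delta_n t$ combined with anisotropic coordinate dilations $\mathrm{diag}(\delta_n^{-1},\ldots,\delta_n^{-d})$ to put the maximal offending minor at a fixed size; the limit is a degree-$\leq N$ polynomial with $L_\Gamma\equiv 1$ on all of $\R$, which forces $\Gamma$ to be the moment curve, contradicting the normalized minor. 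You instead set up an \emph{extremal problem} over $SL(d)$, pass to a limit of near-minimizers after dividing by the seminorm, observe that the limit has identically vanishing torsion, invoke the fact that a polynomial curve with $L_\gamma\equiv 0$ lies in a proper affine subspace, and then use an anisotropic dilation $S_\mu\in SL(d)$ adapted to that subspace to beat the near-minimizer. Your route is arguably cleaner conceptually: the contradiction comes directly from extremality, and the one substantive ingredient is a single classical fact (the Wronskian criterion). The paper's route yields a slightly more explicit normalization, and the paper's Lemma~\ref{L:uniform DM} actually invokes Lemma~\ref{L:CN} in the normalized form $\gamma(0)=0$, $\gamma^{(j)}(0)=e_j$; this is recoverable from your statement as well (compose with $[\gamma'(0),\ldots,\gamma^{(d)}(0)]^{-1}$, which has operator norm and determinant controlled by $C_{N,d}$), so nothing downstream is lost.

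Two small points worth tightening. First, you write $\|S_\mu\tilde\gamma_n\|_\ast = R_n\|S_\mu\bar\gamma_n\|_\ast$; it is really $\|\tilde\gamma_n\|_\ast\|S_\mu\bar\gamma_n\|_\ast$ with $\|\tilde\gamma_n\|_\ast\in[R_n,2R_n]$, so the final bound should carry an extra factor of $2$ — immaterial to the contradiction since it is still $o(R_n)$. Second, for the hyperplane fact, the cleanest proof is probably not your inductive sketch but the direct Wronskian argument: $L_\gamma$ is (up to sign) the Wronskian of $\gamma_1',\ldots,\gamma_d'$, and a vanishing Wronskian of polynomials forces a linear dependence $\sum c_i\gamma_i'\equiv 0$, hence $\sum c_i\gamma_i\equiv\mathrm{const}$, which is a proper affine subspace containing $\gamma(\R)$. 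Either way, this is the one place polynomiality (or analyticity) of the curve is essential to your argument, mirroring the role it plays in the paper's step ``$L_\Gamma\equiv 1$ on $\R$ implies $\Gamma$ is the moment curve.''
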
  

\begin{proof}  Let $A$ denote the affine transformation 
$$
Ax = [\begin{array}{lcr} \gamma'(0) & \cdots & \gamma^{(d)}(0)\end{array}]^{-1} (x-\gamma(0)).
$$
Then $\tfrac12 \leq \det A \leq 2$ by \eqref{E:on I}, so (replacing $A$ with $(\det A^{-1})A$ if needed) the lemma will be proved if we can show that $\|A \gamma\|_{C^N([-1,1])} \leq C_{N,d}$.  

To simplify the notation, we assume henceforth that $A\gamma = \gamma$.  Thus
\begin{equation} \label{E:torsion bound}
\tfrac14 \leq L_\gamma(t) \leq 4, \quad t \in [-1,1], \qquad \gamma(0) = 0, \qquad \gamma^{(j)}(0) = e_j, \quad 1 \leq j \leq d.
\end{equation}
By Taylor's theorem, it suffices to show that $|\gamma^{(j)}(0)| \leq C_{d,N}$ for all $j \geq 1$, and hence by \eqref{E:torsion bound}, it suffices to show that 
\begin{equation} \label{E:torsion bounds higher}
|\det(\gamma^{(n_1)}(0),\ldots,\gamma^{(n_d)}(0))| \leq C_{d,N}, 
\end{equation}
for all $n_1 < \cdots < n_d$; this is because $[\gamma'(0),\ldots,\gamma^{(d)}(0)]$ is the identity, so we can replace any column by $\gamma^{(n)}(0)$ to pick out the coefficient we want.

Suppose that the lemma is false.  Then there exists a sequence of polynomials $\gamma_n$ satisfying \eqref{E:torsion bound} such that
\begin{equation} \label{E:to infty}
\max_{1 \leq n_1 < \cdots < n_d \leq N} |\det(\gamma_n^{(n_1)}(0),\ldots,\gamma_n^{(n_d)}(0))| \to \infty.  
\end{equation}

Let $(\delta_n)$, $0 < \delta_n< 1$, be a sequence, to be determined in a moment.  Define rescaled curves by 
$$
\Gamma_n(t) = (\delta_n^{-1}\gamma_{n,1}(\delta_n t),\ldots,\delta_n^{-d}\gamma_{n,d}(\delta_n t)).
$$
Observe that $\Gamma_n$ obeys \eqref{E:torsion bound} on $[-\delta_n^{-1},\delta_n^{-1}]$ and that if $n_1 + \cdots + n_d > \frac{d(d+1)}2$, 
$$
|\det(\Gamma_n^{(n_1)}(t),\ldots,\Gamma_n^{(n_d)}(t))| \leq \delta_n |\det(\gamma_n^{(n_1)}(\delta_n t),\ldots,\gamma_n^{(n_d)}(\delta_n t))|.
$$
By \eqref{E:to infty}, for each $n$ sufficiently large, we may choose $0 < \delta_n < 1$ so that
\begin{equation} \label{E:bound Gamma}
\max_{1 \leq n_1 < \cdots < n_d \leq N} |\det(\Gamma_n^{(n_1)}(0),\ldots,\Gamma_n^{(n_d)}(0))| = 5;
\end{equation}
\eqref{E:to infty} further implies that $\delta_n \to 0$.  Passing to a subsequence, there exists a single $d$-tuple $1 \leq n_1 < \cdots < n_d \leq N$ such that 
\begin{equation} \label{E:equals 5}
|\det(\Gamma_n^{(n_1)}(0),\ldots,\Gamma_n^{(n_d)}(0))| = 5, \quad \text{for all $n$.}
\end{equation}
By \eqref{E:torsion bound}, $n_1+\cdots+n_d > \tfrac{d(d+1)}2$.

On the other hand, by \eqref{E:bound Gamma}, \eqref{E:torsion bound}, and the observation after \eqref{E:torsion bounds higher}, $|\Gamma_n^{(j)}(0)| \leq C_{N,d}$ for all $j$.  In other words, all of the coefficients of $\Gamma_n$ are bounded.  Thus after passing to a subsequence, there exists a limit, $\Gamma_n \to \Gamma$ (in the metric space of polynomial curves of degree at most $N$).  By \eqref{E:torsion bound} and the fact that $\delta_n \to 0$, 
$$
\tfrac14  \leq |L_\Gamma(t)| \leq 4, \qquad t \in \R.
$$
So by \eqref{E:torsion bound}, $L_\Gamma(t) \equiv 1$, since $\Gamma$ is a polynomial.  This implies that $$
\Gamma(t) = (t,\tfrac12t^2,\ldots,\tfrac1{d!}t^d)
$$
 (no affine transformation is necessary by \eqref{E:torsion bound}).  But by \eqref{E:equals 5},
$$
|\det(\Gamma^{(n_1)}(0),\ldots,\Gamma^{(n_d)}(0))| = 5,
$$
a contradiction.  This completes the proof.  
\end{proof}

Theorem~\ref{T:local rest} almost follows from Lemma~\ref{L:CN} by a result of Drury (Theorem~2 of  \cite{Drury85}), but some additional uniformity is needed.  The first step is to obtain estimates for the offspring curves of $\gamma$.

\begin{lemma}\label{L:uniform DM}  Fix $d \geq 2$ and $N$.  There exists a constant $c_d > 0$ and a decomposition $[-1,1] = \bigcup_{j=1}^{M_{d,N}} I_j$ into disjoint intervals such that the conclusions below hold for every $I=I_j$ and every degree $N$ polynomial $\gamma:\R \to \R^d$ satisfying
\begin{equation}\label{E:Lgamma uniform DM}
\tfrac12 \leq |L_\gamma(t)| \leq 2, \qquad t \in [-1,1].
\end{equation}
If $K\geq 1$ and $(h_1,\ldots,h_K) \in \R^K$, the curve defined by $\gamma_h(t) = \tfrac1K\sum_{j=1}^K \gamma(t+h_j)$ satisfies the following on the interval $I_h : = \cap_{j=1}^K (I-h_j)$:\\
\begin{gather}
\label{E:geometric ineq}
|J_{\gamma_h}(t_1,\ldots,t_d)| \geq c_d \prod_{j=1}^d |L_{\gamma_h}(t_j)|^{\frac 1d} \prod_{1 \leq i < j \leq d}|t_j-t_i|\\
\label{E:bigger torsion}
c_d  \leq |L_{\gamma_h}(t)| \leq c_d^{-1}
\end{gather}
\end{lemma}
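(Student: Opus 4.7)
The plan is to first reduce to a compact family of polynomials via Lemma~\ref{L:CN}, then establish \eqref{E:bigger torsion}, and finally derive \eqref{E:geometric ineq} by applying the Dendrinos--Wright geometric inequality directly to the polynomial offspring curve $\gamma_h$. Both $J_{\gamma_h}$ and $L_{\gamma_h}$ transform by $\det A$ under an affine change $\gamma \mapsto A\gamma$, so Lemma~\ref{L:CN}, which produces an $A$ with $\det A = 1$, allows us to assume $\gamma$ lies in a compact family $\mathcal{K}_{d,N}$ satisfying $\|\gamma\|_{C^N([-1,1])} \leq C_{d,N}$.

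The upper bound in \eqref{E:bigger torsion} is immediate from multilinearity of the determinant:
\begin{equation*}
L_{\gamma_h}(t) = K^{-d} \sum_{j_1,\ldots,j_d = 1}^K \det\bigl(\gamma'(t+h_{j_1}),\ldots,\gamma^{(d)}(t+h_{j_d})\bigr),
\end{equation*}
each of the $K^d$ summands being bounded by $\|\gamma\|_{C^N}^d \leq C_{d,N}$ because $t + h_{j_i} \in I \subset [-1,1]$ by definition of $I_h$.

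The lower bound is the main obstacle. The decisive structural fact is that for the affine model $\gamma_0(t) = (t,\tfrac{t^2}{2!},\ldots,\tfrac{t^d}{d!})$, the matrix $\bigl[\gamma_0'(s_1)\,\gamma_0''(s_2)\cdots\gamma_0^{(d)}(s_d)\bigr]$ is lower triangular with unit diagonal, so $\det\bigl(\gamma_0'(s_1),\ldots,\gamma_0^{(d)}(s_d)\bigr) \equiv 1$ in $(s_1,\ldots,s_d)$; hence when $\gamma = \gamma_0$ the offspring torsion $L_{\gamma_h}$ is identically $1$ for every $h$. For a general $\gamma \in \mathcal{K}_{d,N}$, I would Taylor-expand $\gamma^{(k)}(t+h_j)$ about $\bar h := K^{-1}\sum_j h_j$; column operations on the resulting $d \times d$ matrix extract the main factor $\det(\gamma'(t+\bar h),\ldots,\gamma^{(d)}(t+\bar h)) = L_\gamma(t+\bar h) \geq \tfrac12$ (note $t+\bar h \in [-1,1]$), while the remainders involve higher centered moments of the $h_j$ multiplied by the higher derivatives $\gamma^{(\ell)}$ with $\ell > d$. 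To handle these remainders (which do not become uniformly small), I would use a compactness-and-rescaling argument akin to that in Lemma~\ref{L:CN}: were the lower bound to fail along a sequence $(\gamma_n, h^{(n)}, t_n)$ in the compact parameter space, a rescaling would yield a limiting polynomial $\Gamma$ of degree $d$ with $L_\Gamma \equiv 1$, forcing the mixed determinant to equal $1$ by the triangular computation and contradicting the assumed vanishing. This step also determines the decomposition $[-1,1] = \bigcup I_j$, whose piece count $M_{d,N}$ depends only on $d$ and $N$.

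With \eqref{E:bigger torsion} in hand, \eqref{E:geometric ineq} follows by applying the Dendrinos--Wright geometric inequality from \cite{DW} to the polynomial curve $\gamma_h$, which has degree $\leq N$ and torsion now bounded both above and below. The principal difficulty throughout is controlling the higher-derivative remainders in the Taylor expansion of $L_{\gamma_h}$, where naive perturbation fails because the $h_j - \bar h$ are only $O(1)$ after the normalization.
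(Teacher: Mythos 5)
Your understanding of the problem structure is solid---the affine reduction via Lemma~\ref{L:CN}, the upper bound via multilinearity, and especially the observation that for the moment curve $\gamma_0$ the offspring torsion is identically one---but the proposal has two genuine gaps, both traceable to missing the single decisive move of the paper's proof. The decomposition $[-1,1]=\bigcup I_j$ is chosen \emph{first}, into $O_{d,N}(\delta^{-1})$ intervals of a fixed small length $2\delta$ with $\delta=\delta(d,N)$, and all estimates are then proved on a single such interval $I=[t_0-\delta,t_0+\delta]$. Since $I_h\subset I-h_j$ for every $j$, nonemptiness of $I_h$ forces $\max_j h_j-\min_j h_j<2\delta$, so the centered shifts $h_j-\bar h$ that you flag as ``the principal difficulty'' are in fact $O(\delta)$, not $O(1)$. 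Your plan defers the decomposition to the end (``this step also determines the decomposition''), which is circular: the small-interval localization is precisely what makes the perturbation controllable, and the compactness argument you sketch to substitute for it is not well-posed---$K$ is unbounded, so $(\gamma_n,h^{(n)},t_n)$ does not range over a compact set without passing to empirical measures, and neither the rescaling nor the limiting object that would produce a contradiction is pinned down. Once the interval is small, the paper carries out your column-operation idea cleanly by constructing an explicit unit lower-triangular matrix $A_h$ with $A_h\gamma_0(t)=\tfrac1K\sum_j\gamma_0(t+h_j)-\text{const}$, so that $\Gamma_h:=A_h^{-1}(\gamma_h-\text{const})=\gamma_0+\tilde\Gamma_h$ with $|\tilde\Gamma_h^{(j)}(t)|\lesssim_{d,N}\delta^{d+1-j}$ for $j\le d$ and $t\in[-\delta,\delta]$; multilinearity then gives $L_{\gamma_h}=L_{\Gamma_h}=1+O_{d,N}(\delta)$, i.e.\ \eqref{E:bigger torsion}.

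The second gap is the derivation of \eqref{E:geometric ineq}. Invoking Dendrinos--Wright (Lemma~\ref{L:polynom decomp}) for the polynomial $\gamma_h$ does not give what is needed: the DW decomposition of $\R$ depends on the particular polynomial $\gamma_h$, hence on $h$ and $K$, whereas Lemma~\ref{L:uniform DM} requires a single decomposition independent of $\gamma$ and $h$; moreover DW furnishes \eqref{E:dw gi} only on Cartesian powers $J^d$ of its own pieces $J$, so it says nothing about tuples $(t_1,\ldots,t_d)\in I_h^d$ when $I_h$ straddles a DW boundary---and having $|L_{\gamma_h}|\sim 1$ on $I_h$ does not force $I_h$ to lie inside a single DW piece. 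The paper avoids DW entirely here: because $\Gamma_h$ is $\delta$-close to the moment curve on $[-\delta,\delta]$, one factors the antisymmetric polynomial $J_{\Gamma_h}(t)=P_{\Gamma_h}(t)\prod_{i<j}(t_j-t_i)$ with $P_{\Gamma_h}$ symmetric, shows by differentiating that $P_{\Gamma_h}(s,\ldots,s)=b_d L_{\Gamma_h}(s)$, and then uses the uniform derivative bounds on $\Gamma_h$ to conclude $P_{\Gamma_h}(t)=b_d+O_{d,N}(\delta)$ on $[-\delta,\delta]^d$. That elementary argument is what delivers \eqref{E:geometric ineq} on all of $I_h^d$ with constants depending only on $d,N$.
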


In \cite{Drury85}, an argument is given to prove an analogous lemma with the weaker hypothesis that $\gamma$ is $C^d$.  In that more general case, the uniformity we need is impossible, so we give a detailed proof of Lemma~\ref{L:uniform DM} (via a different argument) here.  

\begin{proof}  Fix a curve $\gamma$ satisfying the hypotheses of the lemma and let $t_0 \in [-1,1]$.  Let $\delta>0$ be a sufficiently small constant, depending only on $d$ and $N$ and to be determined in a moment.  We will show that the conclusions of the lemma hold on $I := [t_0-\delta,t_0+\delta]$.  This is sufficient.

By Lemma~\ref{L:CN}, by reparametrizing and performing an affine transformation on $\gamma$ if necessary, we may assume that $t_0 = 0$, $\gamma(0) = 0$, $\gamma^{(j)}(0) = e_j$, $1 \leq j \leq d$, and $|\gamma^{(j)}(0)| \leq C_{N,d}$ for all $j \geq 1$.  Therefore
$$
\gamma(t) = (t,\tfrac12 t^2, \ldots,\tfrac1{d!}t^d) + \tilde\gamma(t),
$$
where $\tilde \gamma$ is a degree $N$ polynomial with $\tilde\gamma^{(j)}(0) = 0$ for $0 \leq j \leq d$ and $|\tilde\gamma^{(j)}(0)| \leq C_{N,d}$ for all $j$.  

Now fix $K \geq 1$ and $h \in \R^K$.  We can translate $h$, which just shifts $I_h$, and reorder its components with impunity, so without loss of generality, $0=h_1 < h_2 < \cdots < h_K$.  If $I_h = \emptyset$, the conclusions of the lemma are trivial, so we may assume that $h_K < 2\delta$.  

Define a matrix $A_h$ by 
$$
(A_h)_{ij} = 
\begin{cases} 
0, \quad &\text{if $i<j$};\\
\tfrac1K\sum_{k=1}^K\tfrac{h_k^{i-j}}{(i-j)!}, \quad &\text{if $i\geq j$}.
\end{cases}
$$
In particular, $A_h$ is lower triangular with ones on the diagonal, so it is invertible.  Define
$$
\Gamma_h(t) =  A_h^{-1}(\gamma_h(t) - \tfrac1K\sum_{j=1}^K(h_j,\tfrac12h_j^2,\ldots,\tfrac1{d!}h_j^d)), \qquad \tilde\Gamma_h(t) = A_h^{-1}\tilde\gamma_h(t).
$$
Since
$$
A_h (t,\ldots,\tfrac1{d!}t^d) = \tfrac1K\sum_{k=1}^K (t+h_j,\ldots,\tfrac1{d!}(t+h_j)^d) - \tfrac1K\sum_{k=1}^K(h_j,\ldots,\tfrac1{d!}h_j^d),
$$
we have
$$
\Gamma_h(t) = (t,\ldots,\tfrac1{d!}t^d) + \tilde\Gamma_h(t).
$$

Since $\|A_h-I\| \lesssim_d \delta$, for $\delta$ sufficiently small (depending only on $d$), $\|A_h^{-1}-I\| \lesssim_d \delta$.  Furthermore, because $\tilde\gamma^{(j)}(0) = 0$ for $1 \leq j \leq d$ and $|\tilde\gamma^{(j)}(0)| \leq C_{d,N}$ for all $j$, $|\tilde\gamma^{(j)}_h(0)| \leq C_{d,N}\delta^{d+1-j}$ for $1 \leq j \leq d$ and $|\tilde\gamma^{(j)}_h(0)| \leq C_{d,N}$ for all $j$.  Combining these bounds
$$
|\tilde\Gamma_h^{(j)}(t)| \leq C_{d,N}\delta^{d+1-j}, \quad  |\tilde \Gamma_h^{(k)}(t)| \leq C_{d,N}, \qtq{for all} \: 1 \leq j \leq d \leq k, \quad  t \in [-\delta,\delta].
$$

From this and multilinearity of the determinant,
$$
L_{\gamma_h}(t) = L_{\Gamma_h}(t) = 1 + O_{d,N}(\delta),
$$
which implies \eqref{E:bigger torsion} for $\delta$ sufficiently small.

Since $J_{\Gamma_h}$ is an antisymmetric polynomial, for any $s \in \R$, we can write
\begin{align} \label{E:factor J}
&J_{\Gamma_h}(t_1,\ldots,t_d) = P_{\Gamma_h}(t_1,\ldots,t_d)\prod_{1 \leq i < j \leq d}(t_j-t_i)\\\notag
&\qquad= c_d \sum_{\sigma\in S_d} \rm{sgn}(\sigma) P_{\Gamma_h}(t_1,\ldots,t_d)(t_{\sigma(2)}-s) (t_{\sigma(3)}-s)^2 \cdots (t_{\sigma(d)}-s)^{d-1},
\end{align}
where $P_{\Gamma_h}$ is a symmetric polynomial of degree less than $Nd$ and $S_d$ denotes the symmetric group on $d$ letters.  Using the second line of \eqref{E:factor J}, we differentiate $J_{\Gamma_h}$ term-by-term and evaluate at $(s,\ldots,s)$:
$$
P_{\Gamma_h}(s,\ldots,s) = b_d \partial_d^{d-1}\partial_{d-1}^{d-2} \cdots \partial_2|_{t=(s,\ldots,s)}J_{\Gamma_h}(t);
$$
this is because the derivatives must fall on the Vandermonde term.  (Here, $b_d$ is a dimensional constant which will be allowed to change from line to line.)  On the other hand, 
$$
\partial_d^{d-1} \cdots \partial_2|_{t=(s,\ldots,s)} J_{\Gamma_h}(t) = L_{\Gamma_h}(s),
$$
so 
$$
P_{\Gamma_h}(s,\ldots,s) = b_d L_{\Gamma_h}(s).
$$

In addition, because $|\Gamma_h^{(j)}(s)| \leq C_{N,d}$ for all $s \in [-\delta,\delta]$ and all $j \geq 1$, the derivatives of $P_{\Gamma_h}$ also satisfy
$$
|\partial^\alpha P_{\Gamma_h}(s,\ldots,s)| \leq C_{d,N},
$$
for all multiindices $\alpha$ and all $s \in [-\delta,\delta]$.  Therefore,
$$
P_{\Gamma_h}(t) = P_{\Gamma_h}(s,\ldots,s) + O_{d,N}(\delta) = b_d L_{\Gamma_h}(s)+O_{d,N}(\delta) = b_d+O_{d,N}(\delta),
$$
for all $(t_1,\ldots,t_d) \in [-\delta,\delta]^d$ and $s \in [-\delta,\delta]$.  Combining this with \eqref{E:factor J}, we obtain \eqref{E:geometric ineq}.  

This completes the proof of the proposition.
\end{proof}

Drury's induction argument from \cite{Drury85} completes the proof of the theorem.  (A previous draft had used the method of \cite{BOS09, DeMu}; Spyros Dendrinos kindly pointed out that the estimates in Lemmas~\ref{L:CN} and~\ref{L:uniform DM} meant that Drury's approach, which is somewhat more direct, could be used.)  The base case is the trivial observation that
\begin{equation} \label{E:L1Linfty}
\|\scriptE_\gamma f\|_{L^\infty} \leq \|f\|_{L^1(\lambda_\gamma)},
\end{equation}
for any function $f$ and any $C^d$ curve $\gamma$.  Our hypothesis is the statement that for some $1 \leq p < \tfrac{d^2+d+2}2$, there exists a constant $C_{d,p}$ such that
\begin{equation}\label{E:hyp}
\|\scriptE_{\gamma_h}(\chi_{I_h}f)\|_{L^{\frac{d(d+1)p'}2}} \leq C_{d,p}\|f \|_{L^p(\lambda_\gamma)}, \:\: \text{for all $K \geq 1$, $h \in \R^K$}.
\end{equation}
We want to increase $p$ (decreasing $p$ is easy by interpolation with \eqref{E:L1Linfty}).  The inductive step is the following.  

\begin{lemma}[\cite{Drury85}] \label{L:Drury}
If the hypothesis \eqref{E:hyp} is valid for some $1 \leq p_0 < \tfrac{d^2+d+2}2$, then it is also valid for all $p \geq 1$ satisfying the inequality
$$
\tfrac dp > \tfrac2{d+2} + \tfrac{(d-2)p_0^{-1}}{d+2}.
$$
\end{lemma}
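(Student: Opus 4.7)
My plan is to implement Drury's iterated-extension argument from \cite{Drury85}, exploiting two features of the offspring-curve setting: the induction hypothesis \eqref{E:hyp} provides a bound uniform over all offspring of $\gamma$, and the offspring operation is closed in the sense that $(\gamma_h)_{h'} = \gamma_{\tilde h}$ where $\tilde h$ is the $KK'$-tuple of pairwise sums $h_j + h'_{j'}$. Thus the induction hypothesis is immediately available for every curve arising in the iteration.

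\textbf{Iteration identity.} Fix an offspring $\gamma_h$ and an integer $K \ge 2$ to be chosen. From
$$
\scriptE_{\gamma_h}f(x)^K = \int_{(I_h)^K} e^{ix\sum_j \gamma_h(s_j)}\prod_j f(s_j)\lambda_{\gamma_h}(s_j)\,ds,
$$
the change of variables $(s_1,\ldots,s_K) \mapsto (u, h'_1,\ldots,h'_{K-1})$ with $u := s_K$, $h'_j := s_j - s_K$ (and $h'_K := 0$), together with the identity $\sum_j \gamma_h(u + h'_j) = K\,(\gamma_h)_{h'}(u)$, produces
$$
\scriptE_{\gamma_h}f(x)^K = \int_{\R^{K-1}}\bigl(\scriptE_{(\gamma_h)_{h'}}\psi_{h',f}\bigr)(Kx)\,dh',\qquad \psi_{h',f}(u) := \frac{\prod_j(f\lambda_{\gamma_h})(u+h'_j)}{\lambda_{(\gamma_h)_{h'}}(u)}.
$$

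\textbf{Applying the induction hypothesis.} After rescaling out the factor of $K$ and extracting the $L^{q/K}$ norm in $x$ on both sides, the RHS is controlled by $\int_{\R^{K-1}}\|\scriptE_{(\gamma_h)_{h'}}\psi_{h',f}\|_{L^{q/K}}\,dh'$ (via Minkowski when $q/K \ge 1$, or via H\"older against a suitable weight in $h'$ for a sharper estimate). With $q$ chosen so that $q/K = d(d+1)p_0'/2$, the induction hypothesis applies to each $(\gamma_h)_{h'} = \gamma_{\tilde h}$ and bounds the operator norm by $\|\psi_{h',f}\|_{L^{p_0}(\lambda_{(\gamma_h)_{h'}})}$. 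Expanding this $L^{p_0}$ norm and invoking the geometric inequality \eqref{E:geometric ineq} at $\gamma_h$, evaluated at the shifts $u + h'_1,\ldots,u+h'_d$, yields a lower bound of the form $|J_{\gamma_h}(u+h'_1,\ldots,u+h'_d)| \gtrsim \prod_{i<j}|h'_i - h'_j|$ (using $|L_{\gamma_h}|\asymp 1$ from \eqref{E:bigger torsion}), which in turn controls $\lambda_{(\gamma_h)_{h'}}(u)^{-1}$ by a Vandermonde-type factor in $h'$. Integrating first in $u$ by Fubini and then applying H\"older in $h'$—with exponents arranged so that the Vandermonde factor is integrable and each copy of $f\lambda_{\gamma_h}$ carries a power summing to $p$—reduces the expression to the target $C\|f\|_{L^p(\lambda_{\gamma_h})}^K$.

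\textbf{Main obstacle.} The principal challenge is the exponent bookkeeping in the last step: the H\"older exponent in $h'$ is constrained simultaneously by integrability of the Vandermonde singularity (an upper bound) and by the requirement that the power on each $f\lambda_{\gamma_h}$ factor match $p$. Reconciling these two constraints against the admissibility $q = d(d+1)p'/2$ yields precisely the inequality $\tfrac{d}{p} > \tfrac{2}{d+2} + \tfrac{(d-2)p_0^{-1}}{d+2}$. Since integer values of $K$ produce only a discrete sequence of admissible $p$, the continuous range claimed in the lemma is then recovered by interpolating each such endpoint with the trivial $L^1 \to L^\infty$ bound \eqref{E:L1Linfty}.
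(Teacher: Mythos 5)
Your iteration identity is correct, and the observation that the offspring operation is closed under composition is exactly what makes \eqref{E:hyp} applicable at every stage. But there are two serious gaps in the argument you describe, and they are connected.

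First, the Vandermonde factor you invoke does not appear where you say it does. By \eqref{E:bigger torsion}, both $|L_{\gamma_h}|$ and $|L_{(\gamma_h)_{h'}}|$ are $\asymp 1$ on the relevant intervals, so $\lambda_{(\gamma_h)_{h'}}(u)^{-1} \asymp 1$. The geometric inequality \eqref{E:geometric ineq} says $|J_{\gamma_h}(u+h'_1,\ldots,u+h'_d)| \gtrsim \prod_{i<j}|h'_i-h'_j|$, but $J_{\gamma_h}$ is the determinant of $d$ first-derivative vectors at $d$ distinct points, whereas $L_{(\gamma_h)_{h'}}(u)$ is the determinant of $d$ derivatives (of all orders $1$ through $d$) of the single averaged curve at $u$; the two quantities are not related in the way your proposal requires, and the latter carries no Vandermonde singularity. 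Consequently after applying \eqref{E:hyp} you are left with $\int_{\R^{K-1}}\|\prod_j f(\cdot+h'_j)\|_{L^{p_0}}\,dh'$ with an unweighted $h'$ integral, and there is no weight for H\"older to exploit. In Drury's argument the Vandermonde enters as the Jacobian of the \emph{nonlinear} change of variables $\xi = \tfrac1d\sum_j\tilde\gamma(t_j)$ when one passes to the pushforward density $g(\xi) = (\mu*\cdots*\mu)(d\xi)$; your change of variables $(s_1,\ldots,s_K)\mapsto(u,h')$ is linear with constant Jacobian, so that factor never appears.

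Second, and more fundamentally, your scheme omits the Plancherel ($L^2$) input, which is the engine of the induction. Tracking the exponents in your reduction makes the problem visible: you set $q/K = d(d+1)p_0'/2$ and ask for the target bound to lie on the critical line $q = d(d+1)p'/2$, which forces $p' = Kp_0'$, i.e.\ $p < p_0$ for $K\geq 2$. Applying the induction hypothesis and Minkowski alone can only prove estimates that are \emph{weaker} than (or at best equal to, at $K=1$) the hypothesis, just as interpolating \eqref{E:hyp} with the trivial bound \eqref{E:L1Linfty} cannot improve $p_0$. The paper's argument fixes $K=d$ and derives \emph{two} bounds on $\hat g$: an $L^2_x$ bound via Plancherel (using the Vandermonde-weighted mixed norm $L^2_{h'}(L^2_t;|v(h)|^{-1})$) and an $L^q_x$ bound via Minkowski plus \eqref{E:hyp} (in the mixed norm $L^1_{h'}(L^p_t;|v(h)|^{-1})$). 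Complex interpolation between these, followed by real interpolation using $v(0,h') \in L^{d/2,\infty}_{h'}$, is what produces exponents strictly beyond $p_0$. Without the Plancherel leg, your argument has nothing pulling in the right direction, and no amount of rearranging the H\"older exponents in $h'$ will produce the inequality $\tfrac dp > \tfrac2{d+2}+\tfrac{(d-2)p_0^{-1}}{d+2}$.
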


For the convenience of the reader and to make it easier to describe the background in more detail, we sketch the argument below.  Complete details are given in \cite{Drury85} in the case $\gamma = (t,t^2,t^3)$.  We recall the notation
$$
v(h) = c_d\prod_{1 \leq i < j \leq d} (h_j-h_i).
$$

\begin{proof}[Sketch of proof]
Let $\tilde\gamma$ be an offspring curve: $\tilde\gamma = \gamma_h: \tilde I = I_h \to \R^d$ for some $K \geq 1$, $h \in \R^K$.  By virtue of \eqref{E:bigger torsion}, it suffices to bound the unweighted operator
$$
\scriptF_{\tilde\gamma} f(x) = \int_{\tilde I} e^{ix\tilde \gamma(t)}f(t)\, dt.
$$
We denote by $\mu$ the measure defined by
$$
\mu(\phi) = \int_{\tilde I} \phi(\tilde\gamma(t))\, f(t)\, dt.
$$
Since 
\begin{equation} \label{E:*^d}
\|\scriptF_{\tilde\gamma} f\|_{L^p} = \|\check\mu^d\|_{L^{\frac pd}}^{\frac1d},
\end{equation}
we are interested in the $d$-fold convolution of $\mu$.

If $g(\xi) = \mu*\cdots*\mu(d\xi)$, a computation shows that
$$
g(\tfrac1d(\tilde\gamma(t_1) + \cdots + \tilde\gamma(t_d))) = \tfrac{c_d}{J_{\tilde\gamma}(t_1,\ldots,t_d)}f(t_1) \cdots f(t_d).
$$
(We must change variables to do this.  It is a consequence of \eqref{E:geometric ineq} and \eqref{E:bigger torsion} that $(t_1,\ldots,t_d) \mapsto \sum \tilde\gamma(t_j)$ is one-to-one on $\{t \in \tilde I^d : t_1 < \cdots < t_d\}$.  See e.g. \cite[Section~3]{DrMa87}.  Alternatively, $C_{d,N}$-to-one follows from Bezout's theorem.)  

For $h=(h_1,h') = (0,h') \in \{0\}\times\R^{d-1}$, we define
$$
G(t;h) = g(\tfrac1d(\tilde\gamma(t+h_1) + \cdots + \tilde\gamma(t+h_d))).
$$
By a change of variables in the $\xi$ variable,
$$
\hat g(x) = C_d \int_{\R^{d-1}}\int_{\tilde I_h} e^{ix\tilde\gamma_h(t)}g(\tilde\gamma_h(t))\, |J_{\tilde\gamma}(t+h_1,\ldots,t+h_d)|^{-1}\, dt\, dh',
$$
where here and for the remainder of the section, we use the convention that $h_1=0$.  By Plancherel, \eqref{E:geometric ineq}, and \eqref{E:bigger torsion},
$$
\|\hat g\|_{L^2_x} \leq C_d \|G\|_{L^2_{h'}(L^2_t; |v(h)|^{-1})}, 
$$
and by \eqref{E:hyp} (plus interpolation to decrease $p$) and the integral form of Minkowski's inequality,
$$
\|\hat g\|_{L^q_x} \leq C_{d,p} \|G\|_{L^1_{h'}(L^p_t; |v(h)|^{-1})}, \quad 1 \leq p \leq p_0, \:\: q=\tfrac{d(d+1)}2 p_0'.
$$

Thus by interpolation, 
\begin{equation} \label{E:hat g}
\|\hat g\|_{L^c_x} \leq C_{d,a,b} \|G\|_{L^a_{h'}(L^b_t; |v(h)|^{-1})}, 
\end{equation}
for all $(a^{-1},b^{-1})$ in the triangle with vertices $(1,1), (1,p_0^{-1}), (\frac12,\frac12)$ and $c$ satisfying
$$
\tfrac{(d+2)(d-1)}2 a^{-1} + b^{-1}  + \tfrac{d(d+1)}2 c^{-1} = \tfrac{d(d+1)}2.
$$
A computation (again using \eqref{E:geometric ineq} and \eqref{E:bigger torsion}) shows that
$$
\|G\|_{L^a_{h'}(L^b_t; |v(h)|^{-1})} \sim_{d,a,b} \bigl\{ \int |v(h)|^{-(a-1)}(\int_{\tilde I_h} |f(t+h_1) \cdots f(t+h_d)|^b\, dt)^{\frac ab}\, dh'\bigr\}^{\frac1a}.
$$
Using this and the fact that $v(0,h') \in L^{\frac d2,\infty}_{h'}$, one can show that 
\begin{gather} \label{E:Gab}
\|G\|_{L^a_{h'}(L^b_t; |v(h)|^{-1})} \leq \|f\|_{L^{p,1}_t}^d, \qquad \text{for}\\\notag
1 < a < \tfrac{d+2}d, \qquad a \leq b < \tfrac{2a}{d+2-da}, \qquad \tfrac dp = \tfrac{(d+2)(d-1)}2 a^{-1} + b^{-1} - \tfrac{d(d-1)}2.
\end{gather}
Details may be found in \cite{BOS09, Drury85}.  

By \eqref{E:*^d}, \eqref{E:hat g}, \eqref{E:Gab}, and some careful arithmetic,
\begin{equation} \label{E:real interp}
\|\scriptF_{\tilde\gamma} f\|_{L^q} \leq C_{d,q} \|f\|_{L^{p,1}},
\end{equation}
for $p,q,a,b$ satisfying 
\begin{gather*}
q = \tfrac{d(d+1)}2 p', \qquad \tfrac dp = \tfrac{(d+2)(d-1)}2 a^{-1}+b^{-1} - \tfrac{d(d-1)}2, \\
 \tfrac d{d+2} < a^{-1} < 1, b^{-1} \leq a^{-1}, \qquad \tfrac{d+2}a-\tfrac2b < d, \qquad (p_0-2)a^{-1}+ p_0 b^{-1} \geq p_0-1.  
\end{gather*}

The point $(a^{-1},b^{-1}) = (\tfrac d{d+2},\tfrac 2{d+2}+\tfrac{d-2}{(d+2)p_0})$ lies on the boundary of this region and satisfies
\begin{equation} \label{E:ab good}
\tfrac{(d+2)(d-1)}2 a^{-1} + b^{-1} - \tfrac{d(d-1)}2 < \tfrac d{p_0}.
\end{equation}
Thus taking $(a^{-1},b^{-1})$ slightly inside and using real interpolation, 
$$
\|\scriptF_{\tilde\gamma} f\|_{L^q} \leq C_{d,p} \|f\|_{L^p}, \qquad q=\tfrac{d(d+1)}2 p', \quad \tfrac dp > \tfrac 2{d+2} + \tfrac{d-2}{(d+2)p_0}.
$$
This completes the proof of the lemma and thus of Theorem~\ref{T:local rest}.
\end{proof}

\subsection*{Comparison with prior work}
With these arguments in place, it is easier to put our result and our approach in context.  Let $I$ be an interval and $\gamma:I \to \R^d$ be a $C^d$ curve.  To obtain bounds in the Christ range, $q \geq \tfrac{d^2+2d}2$, it is not necessary to deal with the offspring curves; estimate \eqref{E:geometric ineq} with $h=0$ (which we will call the basic geometric inequality), together with some control on the growth of $L_\gamma$, is sufficient (see \cite{ChTAMS, DW, DrMa87}).  For general polynomial curves, the basic geometric inequality is not quite true globally, but in \cite{DW}, Dendrinos--Wright proved a sufficiently uniform substitute (Lemma~\ref{L:polynom decomp}).  

All known proofs beyond the Christ range rely on the method of offspring curves, and this method has seemed much more difficult when $L_\gamma$ is not roughly constant as in \eqref{E:roughly constant}.  Two types of complications arise:  geometric and analytic.  On the geometric front, \eqref{E:bigger torsion} is simply not possible and must be replaced by $|L_{\gamma_h}(t)| \gtrsim |L_\gamma(t)|$, $t \in I_h$.  Even with this adjustment, in practice it has been somewhat easier to prove the basic geometric inequality than the appropriate analogue of Lemma~\ref{L:uniform DM} for those classes of curves (nondegenerate ones and sufficiently small perturbations of monomials) for which both are known.  On the analytic front, in the weighted case, the real interpolation following \eqref{E:real interp} is more difficult (cf.\ \cite{BOS09}).  

Now we turn to the perturbed monomial case.  Let $a_1 < \cdots < a_d$ be real numbers and let $\gamma(t) = (t^{a_1}\theta_1(t),\ldots,t^{a_d}\theta_d(t))$, with $\theta_i \in C^d([0,1])$ and $\theta_i(0) \neq 0$.   Drury--Marshall proved the analogue of Lemma~\ref{L:uniform DM} in the case $\theta_i\equiv 1$ (an omission in the perturbed case is noted in \cite{DeMu}), and used this to obtain restriction estimates off the sharp line (sharp in the Christ range).  Finally, more than twenty years later, Bak--Oberlin--Seeger overcame the analytic difficulties and proved the full range of restriction estimates for these curves \cite{BOS09}, again when $\theta_i \equiv 1$.  In \cite{DeMu}, Dendrinos--M\"uller proved that in the general case, there exists a constant $\delta= \delta_\gamma > 0$ such that the analogue of Lemma~\ref{L:uniform DM} holds on $I=[0,\delta]$ and, arguing similarly to \cite{BOS09}, showed that this gives the full range of $L^p \to L^q$ estimates for restriction to $\gamma|_{[0,\delta]}$.  It is not claimed in \cite{DeMu}, but this implies a non-uniform result for polynomials.  Indeed, if $\gamma$ is a polynomial, then using the reparametrization $t \mapsto t^{-1}$ near $\pm\infty$, near every point of $\R \cup\{\pm\infty\}$, after an affine transformation, $\gamma$ equals a small perturbation of a monomial curve, and so the result follows by compactness of $\R \cup \{\pm\infty\}$.  

Unfortunately, the argument outlined above does not suggest an approach toward proving uniform estimates, even in the polynomial case.  This is not merely a technical issue.  Dating back to Sj\"olin's theorem on convex plane curves \cite{Sjolin}, a major goal has been establishing uniform estimates, and in fact the chief motivation for studying the polynomial case at all has been that it is very clear what the uniform result should be.  Shortly after \cite{DeMu}, Bak--Oberlin--Seeger \cite{BOS11} extended the Dendrinos--Wright range in the general polynomial case by using a result from \cite{BSjfa}, and also proved the full range of estimates for simple curves $\gamma(t) = (t,\ldots,t^{d-1},\phi(t))$.  

There had been some evidence that obtaining the remaining uniform estimates for polynomial curves might be substantially more difficult.  The proof of the basic geometric inequality for polynomials is long and highly nontrivial (it constitutes the bulk of \cite{DW}), and by comparison with the proof of Proposition~8 in \cite{DeMu}, the uniform version seems substantially harder than the local version.  (Roughly, the freedom to choose $\delta_\gamma$ to depend on $\gamma$ seems to make the problem somewhat easier.)  The task of proving a sufficiently uniform global version of Lemma~\ref{L:uniform DM} for polynomials seems potentially even more daunting than the basic geometric inequality, and because the constant $\delta_\gamma$ depends on the local behavior of the curve in a complicated way (and because it is even more difficult to tease out the local behavior after applying the affine transformations needed to make the curve locally monomial-like), the arguments of \cite{DeMu} do not seem to offer a clear path forward.  

We will avoid the above-mentioned geometric and analytic complications by localizing to dyadic torsion scales.  Our task for the next two sections is to show that this localization is reasonable by recovering the global restriction estimate.

\section{A uniform square function estimate} \label{S:square}

The following lemma is essentially due to Dendrinos--Wright in \cite{DW}.  

\begin{lemma} \label{L:polynom decomp}
Let $\gamma:\R \to \R^d$ be a polynomial of degree $N$, and assume that $L_\gamma \not\equiv 0$.  We may decompose $\R$ as a disjoint union of intervals,
$$
\R = \bigcup_{j=1}^{M_{N,d}} I_j,
$$
so that for $t \in I_j$,
\begin{equation} \label{E:L sim}
|L_\gamma(t)| \sim A_j|t-b_j|^{k_j}, \qquad |\gamma_1'(t)| \sim B_j |t-b_j|^{\ell_j},
\end{equation}
and for all $(t_1,\ldots,t_d) \in I_j^d$, 
\begin{equation} \label{E:dw gi}
|J_\gamma(t_1,\ldots,t_d)| \gtrsim \prod_{j=1}^d |L_\gamma(t_j)|^{\frac 1d}\prod_{1 \leq i < j \leq d} |t_j-t_i|,
\end{equation}
where for each $j$, $k_j$ and $\ell_j$ are integers satisfying $0 \leq k_j \leq dN$ and $0 \leq \ell_j \leq N$, and the centers $b_j$ are real numbers not contained in the interior of $I_j$.  Furthermore, the map $(t_1,\ldots,t_d) \mapsto \sum_{j=1}^d \gamma(t_j)$ is one-to-one on $\{t \in I_j^d : t_1 < \cdots < t_d\}$.    The implicit constants and $M_{N,d}$ depend on $N$ and $d$ only.  
\end{lemma}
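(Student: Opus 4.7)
The plan is to construct the decomposition by a Whitney-type dyadic resolution around the real parts of the complex roots of $L_\gamma$ and $\gamma_1'$, verify the power-law bounds \eqref{E:L sim} by tracking which linear factors are ``active'' on each piece, and then reduce the geometric inequality \eqref{E:dw gi} to a symmetric-polynomial lower bound that can be bootstrapped from the on-diagonal identity established in the proof of Lemma~\ref{L:uniform DM}.

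First I would factor $L_\gamma(t) = \alpha\prod_{i=1}^m(t-z_i)$ and $\gamma_1'(t)=\alpha'\prod_{i=1}^{m'}(t-w_i)$ over $\C$, with $m\le dN$ and $m'\le N-1$. Let $\Xi\subset\R$ be the finite set of real parts of these roots. Between consecutive points of $\Xi$ I split at the midpoint; in each half-interval adjacent to a center $b\in\Xi$ I run a dyadic decomposition in $|t-b|$, and finally in the unbounded components I do the same with $b$ the nearest point of $\Xi$. This gives countably many dyadic pieces a priori, but a given factor $(t-z_i)$ can change its asymptotic behavior only at the finitely many scales $|{\rm Im}\,z_i|$ and at the gaps between real parts; merging adjacent pieces on which every factor has the same asymptotic type produces $M_{N,d}$ intervals. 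On each such $I_j$, label a root $z_i$ of $L_\gamma$ as \emph{active} when ${\rm Re}\,z_i=b_j$ and $|{\rm Im}\,z_i|\lesssim |t-b_j|$, and \emph{frozen} otherwise; then $|t-z_i|\sim|t-b_j|$ for active roots and $|t-z_i|\sim {\rm const}$ for frozen ones. Setting $k_j$ equal to the number of active roots (with multiplicity) and $A_j$ equal to $|\alpha|$ times the product of frozen factors evaluated at any reference point of $I_j$ gives the first half of \eqref{E:L sim}; the same reasoning applied to $\gamma_1'$ gives the second half.

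For the geometric inequality, write $J_\gamma=v\cdot P_\gamma$ with $P_\gamma$ a symmetric polynomial of degree less than $Nd$; as in the proof of Lemma~\ref{L:uniform DM} we have $P_\gamma(s,\ldots,s)=c_dL_\gamma(s)$. Using \eqref{E:L sim}, inequality \eqref{E:dw gi} on $I_j^d$ is equivalent to
\[
|P_\gamma(t_1,\ldots,t_d)|\gtrsim A_j\prod_{\ell=1}^d|t_\ell-b_j|^{k_j/d},\qquad (t_1,\ldots,t_d)\in I_j^d.
\]
The plan is to prove this by separating the active from the frozen factors in the expansion of the determinant $J_\gamma$. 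After a $b_j$-centered translation and a diagonal rescaling by the scale of $I_j$, the frozen factors contribute the prefactor $A_j$ up to bounded multiplicative error, while the active piece reduces to $J_\Gamma$ for a rescaled curve $\Gamma$ whose torsion is essentially $|t-b_j|^{k_j}$ times a constant. For this rescaled curve, the symmetric polynomial lower bound follows by a compactness argument in the spirit of Lemma~\ref{L:CN}: the space of normalized active curves with the prescribed vanishing order at the origin is a bounded family of polynomials, and on this family both sides of the inequality are continuous, so the ratio, which equals a positive constant on the diagonal by the identity $P_\Gamma(s,\ldots,s)=c_dL_\Gamma(s)$, is bounded below uniformly.

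Finally, injectivity of $(t_1,\ldots,t_d)\mapsto\sum_\ell\gamma(t_\ell)$ on the open simplex in $I_j^d$ follows from \eqref{E:dw gi}: the Jacobian $J_\gamma$ does not vanish on the ordered simplex, so the map is a local diffeomorphism there, and a Bezout count applied to the polynomial system $\sum_\ell\gamma(t_\ell)=\sum_\ell\gamma(s_\ell)$ produces only finitely many solutions, which combined with the boundary behavior on the closed simplex forces the map to be one-to-one. The main obstacle is the uniform lower bound for $P_\gamma$ on $I_j^d$: while the on-diagonal identity is easy, transferring it off-diagonal uniformly in the polynomial family requires the careful frozen/active splitting above, and this is the technical heart of \cite{DW}.
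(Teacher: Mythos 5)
The paper does not reprove the geometric inequality \eqref{E:dw gi}; it takes the Dendrinos--Wright decomposition of $\R$ on which \eqref{E:dw gi} holds as a black box (the ``lengthy argument'' being the entirety of \cite{DW}), and then \emph{refines} that decomposition by running the D1 procedure on $L_\gamma$ and the D2/D1 procedures on $\gamma_1'$ to also achieve \eqref{E:L sim}; since \eqref{E:dw gi} is stable under passing to subintervals, this order of operations is what makes the lemma go through with bounded complexity. Your proposal inverts this: you first build a Whitney-type decomposition adapted to the roots of $L_\gamma$ and $\gamma_1'$ (this part is fine and is essentially equivalent to D1/D2), and you then try to prove \eqref{E:dw gi} on the resulting pieces. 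That step has a genuine gap.

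The compactness argument you sketch for \eqref{E:dw gi} does not work. In Lemma~\ref{L:CN} compactness is available because the torsion is pinned between two constants, so after normalizing at a point you get a bounded family of polynomials and a non-degenerate limit. On a piece $I_j$ where $|L_\gamma|\sim A_j|t-b_j|^{k_j}$ with $k_j\ge 1$ no such normalization exists: the torsion degenerates at one end, the interval may be unbounded, and the rescaled ``active'' curves do not form a compact family in any norm that controls $P_\gamma$ on all of $I_j^d$. More fundamentally, even for a single fixed curve, knowing $P_\gamma(s,\ldots,s)=c_dL_\gamma(s)$ and continuity of both sides gives positivity of the ratio only along the diagonal $\{t_1=\cdots=t_d\}$, which is a measure-zero slice of $I_j^d$; it says nothing off-diagonal, where the ratio can genuinely vanish if the interval is not chosen correctly (already for $d=2$, $\gamma=(t,t^3-t)$ the inequality fails on any interval containing both a point and its negative, even though the diagonal identity holds). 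The whole content of \cite{DW} is to produce a decomposition for which the off-diagonal lower bound holds, and this cannot be recovered from the diagonal identity by a soft argument. Your own closing sentence concedes that ``transferring it off-diagonal uniformly \dots is the technical heart of \cite{DW},'' but the sketch presented does not bridge that gap.

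A secondary, smaller gap: for injectivity of $(t_1,\ldots,t_d)\mapsto\sum\gamma(t_\ell)$, non-vanishing of $J_\gamma$ plus Bezout gives that the map is a local diffeomorphism with $O_{N,d}(1)$ preimages, which is enough for the paper's purposes in Section~2 (where the change of variables only needs a bounded multiplicity), but it is not the same as one-to-one. Genuine injectivity on the ordered simplex requires the additional argument of \cite[Section~3]{DrMa87} (or the corresponding argument in \cite{DW}); ``boundary behavior on the closed simplex'' alone does not force a finite-to-one local diffeomorphism to be injective.
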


The main difficulty in proving this lemma is establishing \eqref{E:dw gi}.  Fortunately, this has already been done in \cite{DW}, and we will make no attempt to recap the lengthy argument.  As for the rest, strictly speaking, Dendrinos--Wright prove this lemma without the estimate on $\gamma_1'$, but  Lemma~\ref{L:polynom decomp} may be obtained from their theorem in a straightforward manner.  We briefly explain how this can be done. 

\begin{proof}[The deduction of Lemma~\ref{L:polynom decomp} from \cite{DW}]
 Two decomposition procedures are employed in \cite{DW}.  

For the first, given a polynomial $Q$ and interval $I$, the `D1' procedure decomposes $\R$ into a union of $O(\deg Q)$ intervals, $I = \bigcup J$ such that on $J$, $|Q(t)| \sim A_J |t-b_J|^{k_J}$, where $b_J$ is the real part of a zero of $Q$ and $k_J$ is an integer with $0 \leq k_J \leq \deg Q$.  

The second procedure is due to Carbery--Ricci--Wright in \cite{CRW}.  Given a polynomial $P$ and a center $b$, the `D2' procedure decomposes $\R$ into a union of $O(\deg P)$ `gaps' and $O(\deg P)$ `dyadic intervals.'  On a gap, $|P(t)| \sim A|t-b|^k$, for some integer $0 \leq k \leq \deg P$.  On a dyadic interval $|t-b| \sim A$ for some constant $A$.  

From \cite{DW}, we know that it is possible to decompose $\R = \bigcup I$ in such a way that for each $I$, \eqref{E:dw gi} holds on $I^d$.  Performing the D1 procedure with $Q = L_\gamma$, we may additionally assume that $L_\gamma(t) \sim A_I|t-b_I|^{k_I}$ on $I$.  We fix $I$ and perform the D2 decomposition with $P = \gamma_1'$ and $b=b_I$.  If $G$ is a gap interval, the conclusions of the lemma hold on $G \cap I$.  If $D$ is a dyadic interval, we perform the D1 decomposition with $Q=\gamma_1'$ on $D \cap I$.  If $J\subset G \cap I$ is an interval resulting from this decomposition, $|\gamma_1'(t)| \sim B_J|t-b_J|^{k_J}$ and $|L_\gamma(t)| \sim A_J$ on $J$, so the conclusions of the lemma hold.
\end{proof}

Our square function estimate is the following.

\begin{proposition} \label{P:square}
Let $\gamma:\R \to \R^d$ be a polynomial of degree $N$, and assume that $L_\gamma \not\equiv 0$.  Let $\{I_j\}_{j=1}^{M_{N,d}}$ denote the collection of intervals from Lemma~\ref{L:polynom decomp}.  Fix $j$.  For $n \in \Z$, define
$$
I_{j,n} = \{t \in I_j :  2^n \leq |t-b_j| < 2^{n+1}\}.
$$
Then for each for each $(p,q)$ satisfying $q=\tfrac{d(d+1)}2 p'$ and $\infty > q>\tfrac{d^2+d+2}2$, $f \in L^p$, and $j$,
\begin{equation} \label{E:square}
\|\mathcal E_\gamma (\chi_{I_j}f)\|_{L^q(\R^d)} \lesssim \bigl\| \bigl(\sum_n \bigl| \mathcal E_\gamma (\chi_{I_{j,n}} f)\bigr|^2\bigr)^{\frac12}\bigr\|_{L^q(\R^d)}.
\end{equation}
The implicit constant depends only on $N,d,q$.  
\end{proposition}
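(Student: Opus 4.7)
The plan is to reduce the square function bound to a standard Littlewood--Paley theorem applied in the single coordinate $\xi_1$ conjugate to the first component of $\gamma$. The geometric input comes from Lemma~\ref{L:polynom decomp}: on $I_j$ the map $\gamma_1$ is monotone (since $b_j \notin \interior(I_j)$ and $|\gamma_1'|$ does not vanish there), and the image intervals $J_n := \gamma_1(I_{j,n})$ are pairwise disjoint with both position and length comparable to $B_j 2^{n(\ell_j+1)}$. This places them at a standard Littlewood--Paley scale with dyadic base $c := 2^{\ell_j + 1} \leq 2^{N+1}$.

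After the $L^q$-preserving phase modulation $e^{-ix_1 \gamma_1(b_j)}$ and, if necessary, a reflection in $x_1$, we may assume $\gamma_1(b_j)=0$ and $\gamma_1 \geq 0$ on $I_j$. Let $F_n := \mathcal{E}_\gamma(\chi_{I_{j,n}} f)$. A change of variables $\xi_1 = \gamma_1(t)$ in the defining integral then shows that the partial Fourier transform $\mathcal{F}_{x_1} F_n$ is supported in $J_n$, so the pieces $F_n$ have frequency content in the disjoint Littlewood--Paley-type intervals above.

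The main step is to choose smooth cutoffs $\phi_n \colon \R \to [0,1]$ with $\phi_n \equiv 1$ on $J_n$, with $\supp \phi_n$ contained in a mild enlargement of $J_n$, satisfying the scale-invariant estimates $|\phi_n^{(k)}(\xi_1)| \lesssim_k |\xi_1|^{-k}$, and with bounded overlap (each $\xi_1$ belongs to at most $O_N(1)$ of the $\supp \phi_n$). For any signs $(\epsilon_n) \in \{\pm 1\}^\Z$, the multiplier $m_\epsilon(\xi_1) := \sum_n \epsilon_n \phi_n(\xi_1)$ then satisfies the H\"ormander--Mikhlin condition uniformly in $\epsilon$; averaging via Khintchine's inequality produces the standard forward Littlewood--Paley square function estimate
\begin{equation*}
\Bigl\|\Bigl(\sum_n |P_n g|^2 \Bigr)^{1/2}\Bigr\|_{L^{q'}(\R^d)} \lesssim \|g\|_{L^{q'}(\R^d)}, \qquad 1 < q' < \infty,
\end{equation*}
where $P_n$ is the Fourier multiplier in $x_1$ with symbol $\phi_n$, applied fiberwise in $x'=(x_2,\dots,x_d)$.

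The proposition then follows by duality: the adjoint $(h_n) \mapsto \sum_n P_n h_n$ is bounded $L^q(\ell^2) \to L^q(\R^d)$. Since $\phi_n \equiv 1$ on $\supp \mathcal{F}_{x_1} F_n$, one has $P_n F_n = F_n$, so $\mathcal{E}_\gamma(\chi_{I_j} f) = \sum_n F_n = \sum_n P_n F_n$, and therefore
\begin{equation*}
\|\mathcal{E}_\gamma(\chi_{I_j} f)\|_{L^q} \lesssim \Bigl\|\Bigl(\sum_n |F_n|^2\Bigr)^{1/2}\Bigr\|_{L^q}.
\end{equation*}
The principal technical point is constructing $\phi_n$ that is simultaneously $\equiv 1$ on $J_n$, smooth with the scale-invariant derivative bounds, and of bounded overlap; this is elementary once one uses the bound $\ell_j \leq N$, and the argument in fact works in the wider range $1 < q < \infty$.
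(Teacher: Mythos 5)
Your approach — use the monotonicity of $\gamma_1$ on $I_j$ (from $|\gamma_1'|>0$ and $b_j\notin\interior I_j$) to conclude that the pieces $\mathcal E_\gamma(\chi_{I_{j,n}}f)$ have essentially disjoint, lacunarily spaced $\xi_1$-supports, then invoke one-dimensional Littlewood--Paley theory in $x_1$ fiberwise — is exactly the route the paper takes. However, there is a concrete gap in how you arrange the lacunarity.

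You modulate by $e^{-ix_1\gamma_1(b_j)}$ and then assert that the shifted image intervals $J_n=\gamma_1(I_{j,n})$ have \emph{position} comparable to $B_j 2^{n(\ell_j+1)}$. This does not follow when $b_j$ is strictly outside $I_j$: the two-sided bound $|\gamma_1'(t)|\sim B_j|t-b_j|^{\ell_j}$ is only valid on $I_j$, so the integral $\int_{b_j}^{a_j}\gamma_1'$ (where $a_j$ is the endpoint of $I_j$ nearest $b_j$) is uncontrolled. After your modulation the $J_n$ nearest $a_j$ may sit at a distance from the origin that is much larger than their length, and then the scale-invariant bounds $|\phi_n^{(k)}(\xi_1)|\lesssim |\xi_1|^{-k}$ fail, so the H\"ormander--Mikhlin/Khintchine step does not go through. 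The fix, which is what the paper does, is to normalize so that $\gamma_1$ vanishes at the endpoint $a_j$ of $I_j$ rather than at $b_j$; then $\gamma_1(t)=\int_{a_j}^t\gamma_1'$ is controlled by the two-sided bound, and for $|t-b_j|\geq 2|a_j-b_j|$ one gets $|\gamma_1(t)|\sim B_j|t-b_j|^{\ell_j+1}$, so the corresponding $J_n$ have position and length both $\sim B_j 2^{n(\ell_j+1)}$. The at most $O(1)$ remaining intervals $I_{j,n}$ with $|t-b_j|<2|a_j-b_j|$ must be peeled off and absorbed into the square function by the triangle inequality before the Littlewood--Paley argument is applied; your proposal omits this step as well. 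Finally, for the argument to make sense one should first reduce, by approximation and Theorem~\ref{T:local rest}, to $f$ supported in finitely many $I_{j,n}$ so that both sides of \eqref{E:square} are finite, but this is routine. With those repairs your proof coincides with the paper's.
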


\begin{proof}[Proof of Proposition~\ref{P:square}]
If $k_j=0$, the right side of \eqref{E:square} involves $O(1)$ values of $n$, so the inequality is trivial.  We assume henceforth that $k_j > 0$.  

By standard approximation arguments, we may assume that $f$ is supported in the union of finitely many of the $I_{j,n}$.  Thus by Theorem~\ref{T:local rest}, $\mathcal E_\gamma(\chi_{I_j} f) \in L^q$.  

By reparametrizing, we may assume that $b_j=0$ and that $I_j \subset [0,\infty)$.  Let $a_j$ be the left-hand endpoint of $I_j$.  Applying an affine transformation if necessary, we may assume that $A_j = B_j = 1$ and $\gamma(a_j) = 0$.  

Let $n_j = \lceil\log_2 a_j\rceil + 1$.  To prove \eqref{E:square}, it suffices by the triangle inequality to prove that
$$
\|\mathcal E_\gamma (\chi_{I_j \cap [2^{n_j},\infty)}f)\|_{L^q(\R^d)} \lesssim \bigl\| \bigl(\sum_{n \geq n_j} \bigl| \mathcal E_\gamma (\chi_{I_{j,n}} f)\bigr|^2\bigr)^{\frac12}\bigr\|_{L^q(\R^d)}.
$$
(The left side excludes at most two of the $I_{j,n}$.)  By the fundamental theorem of calculus, 
$$
\gamma_1(t) = \int_{a_j}^t \gamma_1'(s)\, ds \sim t^{\ell_j+1} - a_j^{\ell_j+1} \sim t^{\ell_j+1}
$$
on $I_j \cap [2^{n_j},\infty)$.  Thus for $n \geq n_j$,  $\mathcal E_\gamma(\chi_{I_{n,j}}f)$ has its frequency support contained in
$$
\{\xi \in \R^d : 2^{n\ell_j}\sim |\xi_1|\},
$$
and the proposition follows from by Fubini (to separate out the integral in $x_1$) and standard estimates for the Littlewood--Paley square function (see for instance \cite[Ch. VI]{Big Stein}).  
\end{proof}

\section{Almost orthogonality} \label{S:sum}
 
In this section we use a multilinear estimate to show that pieces at different scales interact weakly, and this allows us to sum.  A key step in the multilinear estimate builds on an argument of Christ in \cite{ChTAMS}, though we must interpolate to arrive at the estimate we need.  The procedure for putting the pieces together is inspired by the bilinear approach to Fourier restriction from \cite{TVV}.  Bilinear and multilinear techniques have recently had considerable success in bounding Fourier restriction operators: \cite{BourgainGuth, LeeHamGen, TaoBilin, WolffCone}.  Additional motivation comes from some recent work \cite{DSjfa, DSinprep} on generalized Radon transforms.

\begin{proof}[Proof of Theorem~\ref{T:main}]
By the triangle inequality, it suffices to prove that
$$
\|\mathcal E_\gamma f\|_{L^q(\R^d)} \leq C_{N,d,p}\|f\|_{L^p(\lambda_\gamma)},
$$
when $f$ is supported on one of the intervals $I:= I_j$ from the decomposition in Lemma~\ref{L:polynom decomp}.  By Theorem~\ref{T:local rest}, we already know that this estimate holds if $|L_\gamma| \sim A_j$ on $I_j$, so we may assume that $|L_\gamma(t)| \sim A_j |t-b_j|^{k_j}$ with $k_j \geq 1$.  Making an affine transformation and reparametrization to $\gamma$ if necessary, we may further assume that $A_j = 1$, $b_j = 0$, and $\gamma(0) = 0$.  As mentioned in the introduction, the theorem is known when $q \geq \tfrac{d^2+2d}2$, so we may assume that $q \leq d^2+d$.  These assumptions will remain in place for the remainder of the article.  

To avoid unreadable subscripts, let $D=\tfrac{d^2+d}2$.  

Let $I_n := I_{j,n}$ be the intervals from Proposition~\ref{P:square}.  By Proposition~\ref{P:square}, arithmetic, Minkowski's inequality (since $\frac{q}{2D} \leq 1$), and more arithmetic, we have
\begin{align*}
&\|\mathcal E_\gamma f\|_{L^q}^q \lesssim \int \bigl(\sum_n |\mathcal E_\gamma (\chi_{I_n} f)|^2\bigr)^{\frac q2}\, dx
= \int \prod_{j=1}^D \bigl(\sum_{n_j} |\mathcal E_\gamma (\chi_{I_{n_j}} f)|^2\bigr)^{\frac{q}{2D}}\, dx\\
&\qquad \leq \int\prod_{j=1}^D \sum_{n_j} |\mathcal E_\gamma(\chi_{I_{n_j}} f)|^\frac{q}{D}\, dx
\sim \sum_{n_1 \leq \cdots \leq n_D} \int  \prod_{j=1}^D |\mathcal E_\gamma(\chi_{I_{n_j}} f)|^{\frac{q}{D}}\, dx.
\end{align*}

\begin{lemma}  \label{L:multilinear}  There exists $\eps = \eps_{d,N,p}>0$ such that under the above assumptions on $I$, $\gamma$, and $q$, if $q=D p'$, $n_1 \leq \cdots \leq n_D$, and $f_j$ is an $L^p$ function supported in $I_{n_j}$, $1 \leq j \leq D$, we have
\begin{equation} \label{E:multilinear}
\| \prod_{j=1}^D \mathcal E_\gamma f_j\|_{L^\frac{q}{D}} \leq C_{d,N,p,q} 2^{-\eps(n_D-n_1)}\prod_{j=1}^{D}\|f_j\|_{L^p(\lambda_\gamma)}.
\end{equation}
\end{lemma}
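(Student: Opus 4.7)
The plan is to combine a no-decay ``baseline'' estimate obtained from Theorem~\ref{T:local rest} with a decay-bearing $L^2$ estimate at a single exponent, and then interpolate to obtain the claimed decay throughout the admissible range.

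First, by the affine and parametrization invariance of $\mathcal{E}_\gamma$ together with Lemma~\ref{L:CN}, each piece $\mathcal{E}_\gamma(\chi_{I_{n_j}} f_j)$ can be rescaled to an extension operator on a standard interval satisfying $|L_\gamma|\sim 1$, to which Theorem~\ref{T:local rest} applies with a constant independent of $n_j$. H\"older's inequality then yields the baseline
$$
\|\textstyle\prod_{j=1}^D \mathcal{E}_\gamma f_j\|_{L^{q/D}} \leq C \prod_{j=1}^D \|f_j\|_{L^p(\lambda_\gamma)}
$$
for every $(p,q)$ satisfying \eqref{E:admissibility}, uniformly in the $n_j$'s, but without any decay factor.

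Next, I would extract the decay via a variant of Christ's $d$-linear Plancherel estimate. Fix a $d$-subset $S = \{j_1 < \cdots < j_d\} \subset \{1,\ldots,D\}$ containing $\{1, D\}$ and let $\mu_{j_i} = \gamma_*(f_{j_i}\lambda_\gamma \chi_{I_{n_{j_i}}})$. By Plancherel,
$$
\|\textstyle\prod_{i=1}^d \mathcal{E}_\gamma f_{j_i}\|_{L^2}^2 = c_d \|\mu_{j_1} * \cdots * \mu_{j_d}\|_{L^2}^2,
$$
and the change of variables $y = \sum_i \gamma(t_i)$ (one-to-one on $\prod_i I_{n_{j_i}}$ by Lemma~\ref{L:polynom decomp}, with Jacobian $J_\gamma$) bounds the right-hand side by
$$
c \int |J_\gamma(t)|^{-1} \prod_{i=1}^d (f_{j_i}(t_i)\lambda_\gamma(t_i))^2 \, dt.
$$
Inserting \eqref{E:dw gi} together with the dyadic estimates $|L_\gamma(t_i)| \sim 2^{k n_{j_i}}$ on $I_{n_{j_i}}$ and $|t_{i'} - t_i| \gtrsim 2^{n_{j_{i'}}}$ for scales separated by at least a constant, a direct computation extracts a factor $2^{-\alpha(n_D - n_1)}$ for some $\alpha = \alpha(d,k) > 0$. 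Coupling this with H\"older and Theorem~\ref{T:local rest}-based bounds on the remaining $D - d$ factors produces a (possibly mixed-norm) decay estimate at $p = 2$.

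Finally, multilinear complex interpolation between this decay estimate and the baseline at another $(p,q)$, together with a symmetrization over the choice of $S \supset \{1,D\}$, yields the symmetric conclusion
$$
\|\textstyle\prod_{j=1}^D \mathcal{E}_\gamma f_j\|_{L^{q/D}} \leq C 2^{-\varepsilon(n_D - n_1)} \prod_{j=1}^D \|f_j\|_{L^p(\lambda_\gamma)}
$$
for every $(p,q)$ in the admissible range, with a suitable $\varepsilon = \varepsilon(d,N,p) > 0$.

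\emph{Main obstacle.} The hard part is exponent matching during interpolation. The Plancherel step naturally forces $L^2(\lambda_\gamma)$ norms on the $d$ factors indexed by $S$, while Theorem~\ref{T:local rest} combined with H\"older gives $L^{p_0}(\lambda_\gamma)$ norms on the remaining factors at possibly different $p_0$; reconciling these to a single symmetric $L^p(\lambda_\gamma)$ estimate with decay in $n_D - n_1$ is delicate and is likely where most of the work lies. A secondary issue is the ``clustered'' regime, in which $n_1,\ldots,n_D$ take fewer than $d$ distinct values, so one cannot directly apply the $d$-linear Plancherel estimate to spread-out scales; here one should either pigeonhole to an effective bilinear-type estimate on the extremal factors, or observe that the gap $n_D - n_1$ is then controlled by the number of distinct scales and absorb the required decay into the baseline.
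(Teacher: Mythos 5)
Your outline has the right skeleton — reduce to a $d$-linear estimate on the extremal scales, control the remaining $D-d$ factors via Theorem~\ref{T:local rest}, extract decay from a Christ-type multilinear argument, and interpolate with the no-decay baseline — but the central step is carried out at the wrong exponent, and the consequences are not cosmetic. The paper also works at $p=2$, $q=d(d+1)$, for which H\"older forces the $d$-fold sub-product into $L^{q/d}=L^{d+1}$, not $L^2$; the correct tool is Hausdorff--Young, $\|\prod_{i=1}^d \scriptE_\gamma f_{j_i}\|_{L^{d+1}} \leq \|\mu_{j_1}*\cdots*\mu_{j_d}\|_{L^{(d+1)/d}}$. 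Your Plancherel version forces $q/d=2$, i.e.\ $q=2d$, which lies outside the admissible range \eqref{E:admissibility} for $d\geq 3$, and the natural pairing $L^2\!\to L^2$ on the $S$-factors with $L^1\!\to L^\infty$ on the rest does not produce a single-$p$ estimate without a substantial extra interpolation over $d$-subsets.

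More seriously, the exponent change ruins the integrability you need. After the change of variables and \eqref{E:dw gi}, the $L^2$ computation produces a factor $|J_\gamma|^{-1} \lesssim \prod_i |L_\gamma(t_i)|^{-1/d}\prod_{i<j}|t_i-t_j|^{-1}$, and the Vandermonde exponent $-1$ is not locally integrable for $d\geq 3$: near the full diagonal in $d$ variables, $\prod_{i<j}|t_i-t_j|^{-1}$ has total power $-\binom{d}{2}$ against codimension $d-1$, which diverges. This cannot be evaded by the assumption $n_1<n_D$, since the intermediate scales $n_{j_i}$ are allowed to coincide, and even adjacent dyadic blocks share a boundary; your proposed fix for the clustered regime is based on the claim that $n_D-n_1$ is controlled by the number of distinct scales, which is simply false (take $n_1=0$, $n_2=\cdots=n_{D-1}=M$, $n_D=2M$). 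With Hausdorff--Young one instead gets $|J_\gamma|^{-1/d}$, hence $\prod|t_i-t_j|^{-1/d}$, which is locally integrable. Even then, the decay is not obtained by a ``direct computation'': the paper pigeonholes to find an index $k$ with $n_{k+1}-n_k\geq (n_d-n_1)/d$, factors out the inter-cluster Vandermonde terms pointwise, and then applies Christ's multilinear convolution inequality (inequality~(2.1) of \cite{ChTAMS}) to the two residual integrals $T_k$ and $T_{d-k}$, using $|t_i-t_j|^{-1/d}\in L^{d,\infty}$. Without that estimate, the intra-cluster Vandermonde factors are uncontrolled. I would suggest reworking your argument with Hausdorff--Young at $(d+1)/d$ in place of Plancherel, and replacing the ``direct computation'' with the pigeonhole-plus-Christ step.
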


We postpone the proof of the lemma for now to complete the proof of the theorem.

By the lemma and the estimate immediately preceding it,
\begin{equation} \label{E:E to nj}
\|\mathcal E_\gamma f\|_{L^q}^q \lesssim \sum_{n_1 \leq \cdots \leq n_D} 2^{-\eps|n_D-n_1| }\prod_{j=1}^D\|\chi_{I_n}f\|_{L^p(\lambda_\gamma)}^{q/D}.
\end{equation}
Fix $n_1,n_D$.  We recycle notation by defining
$$
I_{n_1,n_D} := [2^{n_1},2^{n_D}].
$$
Of course, if $n_1 \leq n_2 \leq \cdots \leq n_D$, then $I_{n_j} \subset I_{n_1,n_D}$; moreover, there are only $O((n_D -n_1)^D)$ choices for $n_2,\ldots,n_{D-1}$.  Combining this with \eqref{E:E to nj},
\begin{equation} \label{E:E to Inm}
\|\mathcal E_\gamma f\|_{L^q}^q \lesssim \sum_m \sum_n 2^{-\eps m} m^{C_d}\|\chi_{I_{n,n+m}} f\|_{L^p(\lambda_\gamma)}^q.
\end{equation}

Fix $m$.  By H\"older's inequality and the fact that each point is contained in at most $m$ intervals of the form $I_{n,n+m}$,
\begin{align*}
\sum_n \|\chi_{I_{n,n+m}} f\|_{L^p(\lambda_\gamma)}^q &\leq (\sup_n \|\chi_{I_{n,n+m}} f\|_{L^p(\lambda_\gamma)}^{q-p}) \sum_n \|\chi_{I_{n,n+m}} f\|_{L^p(\lambda_\gamma)}^p \\
&\lesssim m \|f\|_{L^p(\lambda_\gamma)}^q.
\end{align*}
Combining this with \eqref{E:E to Inm},
$$
\|\mathcal E_\gamma f\|_{L^q}^q \lesssim \sum_m 2^{-\eps m} m^{C_d+1} \|f\|_{L^p(\lambda_\gamma)}^q \lesssim \|f\|_{L^p(\lambda_\gamma)}^q.
$$
Thus the only thing left to establish Theorem~\ref{T:main} is the proof of Lemma~\ref{L:multilinear}.
\end{proof}  

\begin{proof}[Proof of Lemma~\ref{L:multilinear}]
By H\"older's inequality,
\begin{align*}
\|\prod_{j=1}^D \mathcal E_\gamma f_j\|_{L^{\frac{q}{D}}}
&\leq \prod_{i=d+1}^{D}\|\mathcal E_\gamma f_{j_i}\|_{L^q}\|\prod_{i=1}^d \mathcal E_\gamma f_{j_i}\|_{L^{\frac qd}},
\end{align*}
whenever the $j_i$ are any enumeration of $\{1,\ldots,D\}$.  Thus it suffices to prove that
\begin{equation} \label{E:d linear}
\|\prod_{j=1}^d \mathcal E_\gamma  f_j\|_{L^{\frac qd}} \leq C_{N,d} 2^{-\eps(n_d-n_1)}\prod_{j=1}^d \|f_j\|_{L^p(\lambda_\gamma)}, \:\text{whenever}\: n_1 \leq \cdots \leq n_d.
\end{equation}

Next, by H\"older's inequality and Theorem~\ref{T:local rest} (by our assumption on the supports of the $f_j$), 
\begin{align*}
\|\prod_{j=1}^{d} \mathcal E_\gamma f_j\|_{L^{\frac{q}{d}}}
&\leq \prod_{j=1}^{d} \|\mathcal E_\gamma f_j\|_{L^q}
\leq C_{d,N,p} \prod_{j=1}^{d} \|f_j\|_{L^p(\lambda_\gamma)}.
\end{align*}
Thus it suffices to prove \eqref{E:d linear} when $n_d \geq n_1 + 2d$.  Furthermore, by complex interpolation (with some $(p,q)$ sufficiently near the endpoint), it suffices to prove \eqref{E:d linear} when $q=d(d+1)$ and $p=2$.  In other words, we have reduced matters to proving 
\begin{equation} \label{E:rst}
\|\prod_{j=1}^d \mathcal E_\gamma f_j\|_{L^{d+1}} \lesssim 2^{-\eps(n_d-n_1)}\prod_{j=1}^d \|f_j\|_{L^2(\lambda_\gamma)}.
\end{equation}

By Hausdorff--Young,
\begin{equation} \label{E:HY}
\|\prod_{j=1}^d \mathcal E_\gamma  f_j\|_{L^{d+1}} \leq \|  (d\mu_1)*\cdots *(d\mu_d)\|_{L^{\frac{d+1}d}},
\end{equation}
where $d\mu_j$ is the measure defined by
$$
d\mu_j(\phi) = \int_{I_{n_j}} \phi(\gamma(t)) f_j(t)\lambda_\gamma(t)\, dt.
$$
We compute
\begin{align*}
[(d\mu_1)*\cdots *(d\mu_d)](\phi) &= \int \phi(\sum_{i=1}^d \gamma(t_i))\prod_{i=1}^d f_i(t_i) \lambda_\gamma(t_i)\, dt\\
&= \sum_{\sigma \in S_d} \int_{P_\sigma} \phi(\sum_{i=1}^d \gamma(t_i))\prod_{i=1}^d f_i(t_i) \lambda_\gamma(t_i)\, dt,
\end{align*}
where $S_d$ is the symmetric group on $d$ letters, and for each $\sigma \in S_d$, 
$$
P_\sigma = \{(t_1,\ldots,t_d) \in I_{n_1} \times \cdots\times I_{n_d} : t_{\sigma(1)} < \cdots < t_{\sigma(d)}\}.
$$

By Lemma~\ref{L:polynom decomp}, $\Phi(t) := \sum_{j=1}^d \gamma(t_j)$ is one-to-one on $P_\sigma$ , so we make the change of variables $\xi=\Phi(t)$, yielding
$$
[(d\mu_1)*\cdots *(d\mu_d)]= \sum_{\sigma \in S_d} F_\sigma,
$$
where 
$$
F_\sigma(\xi) =  \chi_{P_\sigma}(t)(\prod_{i=1}^d f_i(t_i)\lambda_\gamma(t_i)) |J_\gamma(t_1,\ldots,t_d)|^{-1}|_{t = (\Phi|_{P_\sigma})^{-1}(\xi)}.
$$
By the change of variables formula and the geometric inequality \eqref{E:dw gi},
\begin{align*}
\|F_\sigma\|_{L^\frac{d+1}d} &= 
\| \chi_{P_\sigma}(t) \prod_{i=1}^d f_i(t_i)\lambda_\gamma(t_i) |J_\gamma(t_1,\ldots,t_d)|^{-\frac1{d+1}}\|_{L^{\frac{d+1}d}}\\
&\lesssim \|\prod_{i=1}^d f_i(t_i)\lambda_\gamma(t_i)^{\frac12} \prod_{i < j}|t_i-t_j|^{-\frac1{d+1}}\|_{L^{\frac{d+1}d}}.
\end{align*}

By the pigeonhole principle, there exists an index $k$, $1 \leq k < d$ such that $n_{k+1}-n_k \geq \tfrac{n_d-n_1}d$.  In particular, $n_{k+1}-n_k \geq 2$.  Therefore for $(t_1,\ldots,t_d) \in I_{n_1} \times \cdots \times I_{n_d}$, 
\begin{align*}
\prod_{1 \leq i < j \leq d} |t_i-t_j| &= \prod_{i\leq k, j \geq k+1 }|t_i-t_j| \prod_{1 \leq i < j \leq k} |t_i-t_j| \prod_{k+1 \leq i < j \leq d} |t_i-t_j|\\
&\sim \prod_{i \leq k, j \geq k+1} 2^{n_j} \prod_{1 \leq i < j \leq k} |t_i-t_j| \prod_{k+1 \leq i < j \leq d} |t_i-t_j|.
\end{align*}
This implies that
\begin{equation} \label{E:F less T}
\|F_\sigma\|_{L^{\frac{d+1}d}}^{\frac{d+1}d} \sim 2^{-\frac kd (n_{k+1}+\cdots + n_d)} T_k(f_1,\ldots,f_k) \times T_{d-k}(f_{k+1},\ldots,f_d),
\end{equation}
where 
$$
T_\ell(g_1,\ldots,g_\ell) = \int_{\R^\ell} \prod_{i=1}^\ell g_i(t_i)^{\frac{d+1}d}\lambda_\gamma(t_i)^{\frac{d+1}{2d}} \prod_{1 \leq i < j \leq \ell} |t_i-t_j|^{-\tfrac1d} \, dt_1\,\cdots\,dt_\ell.
$$

In the proof of Proposition~2.2 of \cite{ChTAMS}, it is shown (see inequality (2.1) of that article) that if $1 \leq p < \ell$ and $p^{-1}+\tfrac{\ell-1}2 q^{-1} = 1$, then 
$$
\int_{\R^\ell} \prod_{i=1}^\ell f_i(t_i) \prod_{1 \leq i < j \leq \ell} g_{ij}(t_i-t_j)\, dt \lesssim \prod_{i=1}^\ell \|f_i\|_{L^p} \prod_{1 \leq i < j \leq \ell} \|g_{ij}\|_{L^{q,\infty}}.
$$
We apply this to $T_k$ with $q=d$ and $p = \tfrac{2d}{2d-k+1}$ (if $k=1$, $p=k$, but then the inequality is trivial) to see that
$$
|T_k(f_1,\ldots,f_k)| \lesssim \prod_{i=1}^k \|f_i \lambda_\gamma^{\frac 12}\|_{L^{\frac{2(d+1)}{2d-k+1}}}^{\frac{d+1}d}.
$$
Since $\tfrac{2(d+1)}{2d-k+1} < 2$, by H\"older's inequality and the fact that $|\supp f_j| \leq |I_{n_j}| \sim 2^{n_j}$, this implies that
$$
|T_k(f_1,\ldots,f_k)| \lesssim \prod_{i=1}^k 2^{\frac{n_i(d-k)}{2d}} \|f_i\|_{L^2(\lambda_\gamma)}^{\frac{d+1}d}.
$$
Similarly, 
$$
|T_{d-k}(f_1,\ldots,f_k)| \lesssim \prod_{i=k+1}^d 2^{\frac{n_i k}{2d}} \|f_i\|_{L^2(\lambda_\gamma)}.
$$
Inserting these estimates into \eqref{E:F less T} and performing a bit of arithmetic, 
\begin{align*}
\|F_\sigma\|_{L^{\frac{d+1}d}}^{\frac{d+1}d} &\lesssim 2^{\frac{(n_1+\cdots+n_k)(d-k)}{2d} - \frac{(n_{k+1}+\cdots+n_d)k}{2d}} \prod_{i=1}^d \|f_i\|_{L^2(\lambda_\gamma)}\\
&\lesssim 2^{-\frac{k(d-k)}{2d}(n_k-n_{k+1})}\prod_{i=1}^d \|f_i\|_{L^2(\lambda_\gamma)}.
\end{align*}
Since $n_k-n_{k+1} \geq \tfrac1d(n_d-n_1)$ by our choice of $k$, this completes the proof of \eqref{E:d linear} and hence of Lemma~\ref{L:multilinear}.
\end{proof}


\section{Proof of the corollary}\label{S:interp}

We begin with \eqref{E:unweighted}.  Let $I_{\rm{lo}} = \{|L_\gamma| < 1\}$ and $I_{\rm{hi}} = \R \setminus I_{\rm{lo}}$.  With $Z_\gamma$ equal to the set of complex zeroes, we estimate
\begin{equation} \label{E:I lo hi}
1 \gtrsim_\gamma |L_\gamma| \gtrsim_\gamma d_\gamma^{K_{\rm{min}}}, \: \text{on}\: I_{\rm{lo}}, \quad |L_\gamma| \sim_{\gamma} d_\gamma^{K_{\rm{max}}} \gtrsim 1, \:\text{on}\:I_{\rm{hi}}.
\end{equation}
If $p \leq q$, the full range of estimates follow by writing $\hat f \circ \gamma = (\hat f \circ \gamma)\chi_{I_{\rm{lo}}}+(\hat f \circ \gamma)\chi_{I_{\rm{hi}}}$ and applying \eqref{E:I lo hi}, \eqref{E:DM interp}, and the embedding $L^q \subseteq L^{q,p}$.  

Now assume that $p < q$ and $N_{\rm{min}}q < p' < N_{\rm{max}}q$.  Let $I_n = \{2^n \leq |L_\gamma| < 2^{n+1}\}$.  Then 
\begin{equation} \label{E:In}
|I_n| \sim_\gamma 2^{n/K_{\rm{min}}},\:\: n \leq 0, \qquad  |I_n| \sim_\gamma 2^{n/K_{\rm{max}}} ,\:\:  n \geq 0.
\end{equation}
(It is crucial that $\gamma$ is a polynomial.)  Let $\tilde q = \tfrac 2{d^2+d}p'$; then $q < \tilde q$.  By H\"older's inequality, \eqref{E:In}, and our main theorem, if $n \geq 0$,
\begin{align*}
\|\hat f \circ \gamma\|_{L^q(I_n)} &\lesssim  2^{n(\frac1{qK_{\rm{max}}}-\frac1{\tilde q K_{\rm{max}}}-\frac{2}{(d^2+d)\tilde q})} \|\hat f \circ \gamma\|_{L^p(\R^d)}.
\end{align*}
The exponent is just $\tfrac1{K_{\rm{max}}}(\tfrac1q - \tfrac{N_{\rm{max}}}p)<0$, so this is summable over $n \geq 0$.  The argument when $n \leq 0$ is similar.  

Now we turn to the optimality of \eqref{E:unweighted}.  The condition $p < p_d = \tfrac{d^2+d+2}{d^2+d}$ for $q \geq 1$ follows by considering a nondegenerate segment along $\gamma$ and applying a result of Arkhipov--Chubarikov--Kuratsuba in \cite{AKC} (see also \cite{many dudes}).  That $p < p_d$ for $q<1$ follows by interpolation with the restricted strong type $L^{p_d,1} \to L^{p_d}$ estimate for nondegenerate curves from \cite{BOS09}.  It suffices by interpolation to verify the remaining inequalities when $q \geq 1$.  Thus we may consider $L^{q'} \to L^{p'}$ bounds for the extension operator.  The condition $N_{\rm{min}}q \leq p' \leq N_{\rm{max}}q$ follows from Knapp type examples.  Assume $p > q$.  Performing an affine transformation and reparametrization, we may assume that $\gamma(t) = (t^{m_1}\theta_1(t),\ldots,t^{m_d}\theta_d(t))$, where $m_1+\cdots+m_d = N_{\rm{min}}$ and the $\theta_i$ are polynomials with $\theta_i(0)=1$ and $\|\theta_i'\|_{C^0}$ sufficiently small on $[0,1]$.  Let $N$ be a large integer and define
\begin{align*}
g_n(t) &=  2^{\frac{n}{q'}}e^{ix_n\gamma(t)} \chi_{[2^{-n},2^{-n+1})}, \: 1 \leq n \leq N; \quad g = \sum_1^N g_n,
\end{align*}
where the $x_n$ are spaced sufficiently far apart in $\R^d$.  Using Knapp-type arguments and the physical space separation of the $\scriptF_\gamma g_n$, if $p'=N_{\rm{min}}q$,
$$
\|g\|_{L^{q'}} \sim N^{\frac1{q'}}, \qquad \|\mathcal F_\gamma g\|_{L^{p'}} \sim N^{\frac1{p'}}.
$$
Letting $N \to \infty$, since $q' > p'$, $L^{q'} \to L^{p'}$ boundedness cannot hold.  The verification of $p' < N_{\rm{max}}q$ is similar.  

This leaves us to prove \eqref{E:DM interp}.  We begin with a well-known  lemma.

\begin{lemma}  Let $P$ be a real polynomial and $Z_\gamma$ be the set of complex zeroes of $P$.  There exists a decomposition $\R = \bigcup_{j=1}^{C(\#Z_\gamma, \deg P)}I_j$ as a union of intervals such that the  following holds for each $j$.  There exist $C_j>0$, $b_j \in Z_\gamma$, and $k_j \in \Z_{\geq 0}$ so that for every $t \in I_j$, $b_j$ is the closest element of $Z_\gamma$ to $t$ and $|P(t)| \sim C_j |t-b_j|^{k_j}$.  The implicit constant depends only on $\deg P$.  
\end{lemma}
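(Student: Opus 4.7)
The plan is to decompose $\R$ in two stages: first by Voronoi cells in $\C$ with respect to $Z_\gamma$, so that each piece has a well-defined nearest zero, and then by transition points within each Voronoi interval, chosen so that every factor of $P$ has a clean asymptotic form in terms of $|t-b|$ and $|b-\alpha|$.

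For the first stage, for each $b \in Z_\gamma$ the Voronoi cell $V_b = \{z \in \C : |z-b| \leq |z-\alpha|\ \text{for all}\ \alpha \in Z_\gamma\}$ is an intersection of closed half-planes in $\C \cong \R^2$ and hence convex; thus $V_b \cap \R$ is an interval (possibly empty). This produces a partition $\R = \bigcup_{b \in Z_\gamma}(V_b \cap \R)$ into at most $\#Z_\gamma$ intervals, on each of which the associated $b$ is a closest element of $Z_\gamma$ to every $t$.

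For the second stage, fix a Voronoi interval $V$ with center $b$. For each $\alpha \in Z_\gamma \setminus \{b\}$, the equation $|t-b|^2 = |b-\alpha|^2$ becomes $(t-\operatorname{Re} b)^2 = |b-\alpha|^2 - (\operatorname{Im} b)^2$, which is quadratic in $t$ and has at most two real solutions. Subdividing $V$ at all such points produces at most $O(\#Z_\gamma)$ sub-intervals $I_j$ with the following property: for each $\alpha \in Z_\gamma$, uniformly on $I_j$ either $|t-b| \geq |b-\alpha|$ (the \emph{close} regime) or $|t-b| \leq |b-\alpha|$ (the \emph{far} regime) holds. In the close regime, the ordinary triangle inequality together with the Voronoi bound $|t-\alpha| \geq |t-b|$ gives $|t-b| \leq |t-\alpha| \leq |t-b|+|b-\alpha| \leq 2|t-b|$, i.e.\ $|t-\alpha| \sim |t-b|$. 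In the far regime, the triangle inequality gives $|t-\alpha| \leq 2|b-\alpha|$, and for the matching lower bound one distinguishes the subcase $|t-b| \leq |b-\alpha|/2$ (use the reverse triangle inequality) from the subcase $|b-\alpha|/2 < |t-b| \leq |b-\alpha|$ (use the Voronoi bound $|t-\alpha| \geq |t-b| > |b-\alpha|/2$), yielding $|t-\alpha| \sim |b-\alpha|$.

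Finally, writing $P(t) = c\prod_{\alpha \in Z_\gamma}(t-\alpha)^{m_\alpha}$ with $m_\alpha$ the multiplicity, one splits the product on each $I_j$ according to the close/far regime of each $\alpha$ and sets
\[
k_j := \sum_{\alpha\ \text{close}} m_\alpha, \qquad C_j := |c| \prod_{\alpha\ \text{far}} |b-\alpha|^{m_\alpha}, \qquad b_j := b;
\]
note that the close sum automatically includes $m_b$. The per-factor comparisons combine to give $|P(t)| \sim C_j |t-b_j|^{k_j}$ on $I_j$ with implicit constant at most $4^{\deg P}$, depending only on $\deg P$ as required. I do not anticipate a significant obstacle; the only step requiring care is the lower bound in the far regime in the intermediate range described above, which is precisely where the Voronoi property is essential.
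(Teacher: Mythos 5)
Your proposal is correct and follows essentially the same route as the paper's proof: first a Voronoi-cell decomposition of $\R$ to pin down the nearest zero $b$, then a further subdivision of each Voronoi interval so that every other zero $\alpha$ is consistently in a \emph{close} regime (where $|t-\alpha|\sim|t-b|$) or a \emph{far} regime (where $|t-\alpha|\sim|b-\alpha|$), and finally factoring $P$ and matching up the comparisons. The only technical difference is that the paper subdivides by dyadic annuli with boundaries at $\tfrac12|b-b_j|$, so the far-regime lower bound follows from the reverse triangle inequality alone, whereas you subdivide at the transition points $|t-b|=|b-\alpha|$ and therefore need the Voronoi property to handle the borderline subcase $|b-\alpha|/2<|t-b|\leq|b-\alpha|$, which you do correctly.
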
  

This is a simple variant of a well known (see e.g.\ \cite{CRW}) lemma; for the convenience of the reader, we give the short proof. 

\begin{proof} Define $I_b = \{t \in \R: d_\gamma = |t-b|\}$, $b \in Z_\gamma$.  Of course, $\R = \bigcup_b I_b$, so we may leave $b$ fixed for the remainder of the argument.  We index the elements of $Z_\gamma$ by $b=b_0,\ldots,b_M$, so that $|b-b_0| \leq \cdots \leq |b-b_M|$.  For convenience, we also set $b_{M+1} = \infty$.  Define a sequence of annuli, 
\begin{gather*}
A_j = \{t \in I_b : \tfrac12|b-b_j| \leq |t-b| \leq \tfrac12|b-b_{j+1}|\}, \qquad 0 \leq j \leq M.
\end{gather*}
Then $I_b = \bigcup A_j$, and on $A_j$, by the triangle inequality and the definition of $I_b$, 
$$
|P(t)| = C \prod_{k=0}^M|t-b_k|^{n_k} \sim C \prod_{k=j+1}^M |b-b_k|^{n_k}|t-b|^{n_0+\cdots+n_j},
$$
where we are taking $n_k = 0$ if $P(b_k) \neq 0$.  
\end{proof}

Now we return to the proof of \eqref{E:DM interp}.  We adapt an argument of Drury--Marshall \cite{DrMa85} to the polynomial case.  By the triangle inequality, it suffices to prove the estimate when the $L^{q,r}_t$ norm is restricted to one of the intervals $I=I_j$ from our lemma.  Let $C,k,b$ denote the corresponding constant, power, and zero.  In particular, $d_\gamma(t) = |t-b|$ on $I$.  Applying an affine transformation to $\gamma$ if necessary (this leaves \eqref{E:DM interp} invariant), we may assume that $C=1$.  

We rewrite our main result \eqref{E:restriction pq} as 
$$
\|\hat f(\gamma(t))|t-b|^{\frac k{p'}}\|_{L^q(\lambda_\gamma(t)\, dt)} \leq C_{p,d,N} \|f\|_{L^p_x}, \qquad p' = \tfrac{d(d+1)}2 q, \quad 1 \leq p < \tfrac{d^2+d+2}{d^2+d}.
$$
For any $r > 0$, $|t-b|^{-\frac1r}$ is in $L^{r,\infty}$, so by the Lorentz space version of H\"older's inequality \cite{SteinWeiss}, 
$$
\|\hat f(\gamma(t)) |t-b|^{\frac k{p'}-\frac1c+\frac1q}\|_{L^{c,q}_t} \leq C_{p,c,d,N}\|f\|_{L^p_x}, 
$$
whenever $p'=\tfrac{d(d+1)}2 q$, $c \leq q$, $1 \leq p < \tfrac{d^2+d+2}{d^2+d}$.   

Marcinkiewicz interpolation \cite{SteinWeiss} along the level sets of $\frac k{p'} - \frac1c+\frac1q$, within the region $p'=\tfrac{d(d+1)}2 q$, $c \leq q$, $1 < p < \tfrac{d^2+d+2}{d^2+d}$, gives the estimate 
$$
\|\hat f(\gamma(t)) |L_\gamma(t)|^{\frac1{p'}} d_\gamma(t)^{-\frac1q+\tfrac{d(d+1)}{2p'}}\|_{L^{q,p}_t(I)} \leq C_{p,q,N,k}\|f\|_{L^p_x}, 
$$ 
and since $0 \leq k \leq N$, we can put the pieces back together to obtain \eqref{E:DM interp}.



\end{document}